\def\div{\operatorname{div}}
\def\curl{\operatorname{curl}}
\def\claim#1{\begin{trivlist}\item[\hskip\labelsep\bf#1]\it}
\def\endclaim{\end{trivlist}}
\newtheorem{theorem}{Theorem}[section]
\newtheorem{lemma}[theorem]{Lemma}
\newtheorem{corollary}[theorem]{Corollary}
\numberwithin{equation}{section}
\theoremstyle{definition}
\begin{document}
\allowdisplaybreaks
\title{ On the inverse elastic scattering by interfaces using one type of scattered waves }
\author{
Manas Kar\thanks{RICAM, Austrian Academy of Sciences,
Altenbergerstrasse 69, A-4040, Linz, Austria.
(Email:manas.kar@oeaw.ac.at)
\newline
Supported by the Austrian Science Fund (FWF): P22341-N18.} \qquad{ Mourad Sini}
\thanks{RICAM, Austrian Academy of Sciences,
Altenbergerstrasse 69, A-4040, Linz, Austria.
(Email:mourad.sini@oeaw.ac.at)
\newline
Partially supported by the Austrian Science Fund (FWF): P22341-N18.}
}
\maketitle

\begin{abstract}
We deal with the problem of the linearized and isotropic elastic inverse scattering by interfaces. We prove that the scattered $P$-parts or
$S$-parts of the far field pattern, corresponding to all the incident plane waves of pressure or shear types, uniquely determine the obstacles for both the penetrable and impenetrable obstacles.
In addition, we state a reconstruction procedure.
In the analysis, we assume only the Lipschitz regularity of the interfaces
and, for the penetrable case, the Lam{\'e} coefficients to be measurable and bounded, inside the obstacles, with the usual jumps across these interfaces.
\end{abstract}

\renewcommand{\theequation}{\thesection.\arabic{equation}}



\begin{section}{\textbf{Introduction}}
Assume $D\subset\mathbb{R}^3$ to be a bounded domain such that $\mathbb{R}^3\setminus\overline{D}$ is connected. 
Let the boundary $\partial D$ to be Lipschitz regular. We assume that the Lam{\'e}
coefficients $\lambda$ and $\mu$ are measurable and bounded and satisfy the conditions $\mu>0$ and $2\mu +3\lambda>0$ and $\mu(x) = \mu_0,$ $\lambda(x) = \lambda_0$ for $x\in \mathbb{R}^3\setminus\overline{D}$ 
with $\mu_0$ and $\lambda_0$ being constants. In addition to that we set $\lambda_D :=\lambda - \lambda_0$ and $\mu_D := \mu-\mu_0$ and assume that $|\mu_D|>0$ and $2\mu_D+3\lambda_D\geq 0$.
We formulate the direct scattering problems as follows.
  Let $u^i$ be an incident field, i.e. a vector field
satisfying $\mu_0 \Delta u^i +(\lambda_0 +\mu_0)\nabla \div u^i+ \kappa^2 u^i=0$ in $\mathbb{R}^3$, where $\kappa$ is the frequency, and $u^s(u^i)$ be the scattered field associated to the incident field $u^i$. 
In the impenetrable case, the scattering problem reads as follows
\begin{equation}  \label{Lame}
\begin{cases}
\mu_0 \Delta u^s +(\lambda_0 +\mu_0)\nabla \div u^s +\kappa^2 u^s =0, \; \mbox{ in }
\mathbb{R}^3\setminus \overline{D}  \\
\sigma(u^s)\cdot \nu=-\sigma(u^i)\cdot \nu,\; \mbox{ on } \partial D  \\
\lim_{\vert x \vert \rightarrow \infty}\vert x \vert (\frac{\partial u_{p}^{s}}{%
\partial {\vert x \vert}}-i\kappa_p u_{p}^{s})=0, \mbox{ and } \lim_{\vert x \vert
\rightarrow \infty}\vert x \vert (\frac{\partial u_{s}^{s}}{\partial {\vert x \vert%
}}-i\kappa_s u_{s}^{s})=0,
\end{cases}
\end{equation}
where the last two limits are uniform in all the directions $\hat{x}:=\frac{x}{\vert x \vert} \in
\mathbb{S}^2$ where $\sigma(u^s)\cdot \nu:=(2\mu \partial_{\nu}+\lambda \nu \div
+\mu \nu \times \curl)u^s$ and the unit normal vector $\nu$ is directed into
the exterior of $D$.
 In the penetrable obstacle case, the total field $u^t :=u^s + u^i$ satisfies
\begin{equation}  \label{Lame_Penetra}
\begin{cases}
\nabla\cdot(\sigma(u^t)) + \kappa^2 u^t = 0, \; \mbox{ in } \mathbb{R}^3  \\
\lim_{\vert x \vert \rightarrow \infty}\vert x \vert (\frac{\partial u_{p}^{s}}{%
\partial {\vert x \vert}}-i\kappa_p u_{p}^{s})=0, \mbox{ and } \lim_{\vert x \vert
\rightarrow \infty}\vert x \vert (\frac{\partial u_{s}^{s}}{\partial {\vert x \vert%
}}-i\kappa_s u_{s}^{s})=0.
\end{cases}
\end{equation}
Let us introduce some further notations. For any displacement field $v$, taken to be a column vector, the corresponding stress tensor $\sigma(v)$ can be represented as a $3\times 3$ matrix:
$
 \sigma(v) = \lambda(\nabla\cdot v)I_3 + 2\mu\epsilon(v),
$
where $I_3$ is the $3\times 3$ identity matrix and $\epsilon(v) = \frac{1}{2}(\nabla v + (\nabla v)^{\top})$ denotes the infinitesimal strain tensor. Note that for $v = (v_1,v_2,v_3)^{\top}$, $\nabla v$ denotes the $3\times 3$ matrix whose
$j$-th row is $\nabla v_j$ for $j= 1,2,3.$ Also for a $3\times 3$ matrix function $A$, $\nabla\cdot A$ denotes the column vector whose $j$-th component is the divergence of the $j$-th row of $A$ for $j=1,2,3.$ \\
Here, we denote $u_{p}^{s}:=-\kappa^{-2}_p \nabla
\div u^s$ to be the longitudinal (or the pressure) part of the field $u^s$ and
$u_{s}^{s}:=\kappa^{-2}_s \mathrm{\curl} \mathrm{\curl} u^s$ to be the
transversal (or the shear) part of the field $u^s$. The constants $\kappa_p:=%
\frac{\kappa}{\sqrt{2\mu_0+\lambda_0}}$ and $\kappa_s:=\frac{\kappa}{\sqrt{\mu_0}}$
are known as the longitudinal and the transversal wave numbers respectively. We
have the well known decomposition of the total field $u$ as the sum of its
longitudinal and transversal parts, i.e. $u=u_p+u_s$. It is well known that
the scattering problems (\ref{Lame}) and \eqref{Lame_Penetra} are well posed using integral equations or variational methods, see for instance 
\cite{Go-Ke, Kup1, Kup2, Ma-Mit} and \cite{ChA}.
The scattered field $u$ has 
the following asymptotic expansion at infinity:
\begin{equation}
u(x):=\frac{e^{i\kappa_p \vert x\vert}}{\vert x \vert} u^\infty_p(\hat{x})+%
\frac{e^{i\kappa_s \vert x\vert}}{\vert x \vert} u^\infty_s(\hat{x})+ O(%
\frac{1}{\vert x \vert^2}), \; \vert x \vert \rightarrow \infty
\end{equation}
uniformly in all the directions $\hat{x}\in \mathbb{S}^2$, see
\cite{A-K} for instance.
The fields $u^\infty_p(\hat{x})$ and $u^\infty_s(\hat{x})$ defined on $%
\mathbb{S}^2$ are called correspondingly the longitudinal and transversal
parts of the far field pattern. The longitudinal part $u^\infty_p(\hat{x})$
is normal to $\mathbb{S}^2$ while the transversal part $u^\infty_s(\hat{x})$
is tangential to $\mathbb{S}^2$. As incident waves, we use pressure (or longitudinal) plane waves or shear (or transversal)
plane waves. They have the analytic forms $u^p_i(x, d):=d e^{i\kappa_p d\cdot x}$
and $u^s_i(x, d):=d^{\perp}e^{i\kappa_s d\cdot x}$ respectively, where $d^{\perp}$
is any vector in $\mathbb{S}^2$ orthogonal to $d$. Remark that $%
u^p_i(\cdot, d)$ is normal to $\mathbb{S}^2$ and $u^s_i(\cdot, d)$ is
tangential to $\mathbb{S}^2$. \\
We denote by $(u_p^{\infty, p}(\cdot, d), u_s^{\infty, p}(\cdot, d))$ the far field pattern associated with the pressure incident field $u^p_i(\cdot, d)$.
Correspondingly, we set $(u_p^{\infty, s}(\cdot, d), u_s^{\infty, s}(\cdot, d))$ to be the far field
pattern associated with the shear incident field $u^s_i(\cdot, d)$.
We write these patterns in a matrix form
\begin{equation}  \label{Farfield-matrix}
 (u^p_i, u^s_i)\mapsto F(u^p_i, u^s_i):=\left[%
\begin{array}{ccc}
u_p^{\infty, p}(\hat x, d) & u_p^{\infty, s}(\hat x, d)   \\
u_s^{\infty, p}(\hat x, d) & u_s^{\infty, s}(\hat x, d)
\end{array}%
\right]
\end{equation}
In this paper, our concern is to show that the knowledge
of any component in \eqref{Farfield-matrix}, for all $(\hat x,d)\in \mathbb{S}^2\times\mathbb{S}^2,$ is enough to determine $D$
and describe a reconstruction procedure. 

From the knowledge of the full farfield map $F$, the first uniqueness result was derived
by Hahner and Hsiao, see \cite{H-S}. Later on the sampling type methods for solving this obstacle inverse scattering problem have been developed by
Alves and Kress \cite{A-K},  Arens \cite{A}, A. Charalambopoulos, D. Gintides
and K. Kiriaki \cite{C-G-K1, C-G-K2, G-K} using the full matrix
(\ref{Farfield-matrix}) for all directions $\hat{x}$ and $d$ in $\mathbb{S}%
^2 $. In \cite{G-S}, D. Gintides and M. Sini, show that any one of the entries in the matrix \eqref{Farfield-matrix}, for all $\hat x, d$ in $\mathbb{S}^2$, is enough. 
In their approach they assumed a $C^4$-regularity of the scatterer to derive the exact asymptotic 
expansion of the so-called probe (or singular sources) indicator function.
Recently in \cite{G-A-M}, Hu, Kirsch and Sini reduced the regularity assumption for the rigid impenetrable obstacles in 3D using the data $u_{p}^{\infty,p}(\cdot,\cdot)$ or $u_{s}^{\infty,s}(\cdot,\cdot)$.

In this paper, we show a systematic way of solving this problem for impenetrable or penetrable obstacles with Lipschitz regularity assumptions on the interfaces using
any component of \eqref{Farfield-matrix}. The analysis is based on some key estimates obtained in \cite{Kar-Sini2} for both the impenetrable obstacle and the penetrable cases.
These estimates combined with some precise analysis of the $P$-parts and $S$-parts of the elastic fundamental tensor allows us 
to justify the needed blow up property of the probe / singular sources indicator functions, see Theorem \ref{main_theo}. 
Regarding the impenetrable case, we consider the free boundary condition. However, as it can be seen, the analysis, based on variational inequalities, can be done for other boundary 
conditions as well (i.e. Dirichlet type, Third type or Forth type boundary conditions, \cite{Kup2}).

The paper is organized as follows.
In Section 2, we state the indicator functions linking the used far field parts to the corresponding $P$-part or $S$-part of the elastic 
fundamental tensor, see \eqref{ppMM}, \eqref{psMM}, \eqref{ssMM} and \eqref{spMM} respectively. 
In Section 3, we state the lower and upper estimates of these indicator functions, see Theorem \ref{main_theo}, and then apply them to show 
the uniqueness results and to justify the reconstruction algorithm (which is based on the probing method, see \cite{I} and \cite{P}). In Section 4 and Section 5,
we justify Theorem \ref{main_theo} for the impenetrable and penetrable obstacle cases respectively. We finish the paper by an Appendix containing
some needed computations concerning the elastic fundamental tensor.

\end{section}
\section{\textbf{The indicator functions linking the used farfield parts to the elastic fundamental tensor}}
  We start with the following identity, see for instance Lemma 3.1 in \cite{A-K}:
\begin{equation}\label{Farfield-Nearfield}
\int_{\partial D} \left( u\cdot \overline{\sigma(v_h)\cdot \nu} -\overline{%
v_h} \cdot \sigma(u) \cdot \nu \right) ds(x)=4\pi\int_{\mathbb{S}%
^2}\left(u^{\infty}_p\cdot \overline{h_p(\hat x)}+u^{\infty}_s\cdot\overline{h_s(\hat x)} \right)ds(\hat x)
\end{equation}
for all radiating fields $u$ with far field pattern $(u_{p}^{\infty}, u_{s}^{\infty})$, where $v_h$ is the Herglotz field with density $h=(h_p, h_s) \in L^2_p(\mathbb{S}%
^2)\times L^2_s(\mathbb{S}^2)$, i.e. $v_h(x):=\int_{\mathbb{S}^2}[e^{i\kappa_p x
\cdot d}h_p(d) +e^{i\kappa_s x \cdot d}h_s(d)]ds(d)$ with $L_{p}^{2}(\mathbb{S}^2) := \{u\in (L^2(\mathbb{S}^2))^3; u(d)\times d=0\}$ while $L_{s}^{2}(\mathbb{S}^2) := \{u\in (L^2(\mathbb{S}^2))^3; u(d)\cdot d=0\}.$
Recall that $\Phi (x,y)$ is the Green's elastic tensor and we set $G_p(x,y)$ and $G_s(x,y)$ as the Green's function associated to the Helmholtz operators i.e. 
$G_t(x,y) = \frac{e^{i\kappa_t|x-y|}}{4\pi|x-y|}$, $t=p$ or $s.$
Also recall that the fundamental tensor of the elasticity is of the form
\begin{equation}\label{funda_elas1}
 \Phi(x,y) := \frac{\kappa_{s}^{2}}{4\pi\kappa^2}\frac{e^{i\kappa_s|x-y|}}{|x-y|}I + \frac{1}{4\pi\kappa^2}\nabla_x\nabla_{x}^{\top}[\frac{e^{i\kappa_s|x-y|}}{|x-y|} - \frac{e^{i\kappa_p|x-y|}}{|x-y|}]
\end{equation}
where $I$ is the identity matrix.
We denote the $p$-part of the elastic fundamental tensor by $\Phi_p(x,y).$ It is of the form
\begin{equation}\label{android}
\begin{split}
\Phi_p(x,y) 
&:= -\frac{1}{\kappa^2}\nabla_x\nabla_{x}^{\top} G_p(x,y) \\
& = -\frac{1}{\kappa^2}\left(\begin{array}{ccc}
                            \frac{\partial^2G_p(x,y)}{\partial x_{1}^{2}} & \frac{\partial^2G_p(x,y)}{\partial x_2\partial x_1} & \frac{\partial^2G_p(x,y)}{\partial x_3\partial x_1} \\
                            \frac{\partial^2G_p(x,y)}{\partial x_1\partial x_2} & \frac{\partial^2G_p(x,y)}{\partial x_{2}^{2}} & \frac{\partial^2G_p(x,y)}{\partial x_3\partial x_2} \\
                            \frac{\partial^2G_p(x,y)}{\partial x_1\partial x_3} & \frac{\partial^2G_p(x,y)}{\partial x_2\partial x_3} & \frac{\partial^2G_p(x,y)}{\partial x_{3}^{2}}
                           \end{array}
                           \right) \\
& =: (\Phi_{p}^{1},\Phi_{p}^{2},\Phi_{p}^{3}),                     
\end{split}
\end{equation}
where $\Phi_{p}^{j}, j=1,2,3 $ are the column vectors of the $p$-part of the elastic fundamental tensor.
The $s$-part of the elastic fundamental tensor denoted by $\Phi_s(x,y)$ is of the form
\begin{equation}
\begin{split}
\Phi_s(x,y) 
&:=\frac{1}{\kappa^2}\curl_x\curl_x(G_s(x,y)I) \\
& = \frac{\kappa_{s}^{2}}{\kappa^2}G_s(x,y)I + \frac{1}{\kappa^2}\nabla_x\nabla_{x}^{\top} G_s(x,y) \\
& = \frac{1}{\kappa^2}\left(\begin{array}{ccc}
                             k_{s}^{2}G_s+\frac{\partial^2G_s}{\partial x_{1}^{2}} & \frac{\partial G_s}{\partial x_2\partial x_1} & \frac{\partial^2G_s}{\partial x_3\partial x_1} \\
                             \frac{\partial^2G_s}{\partial x_1\partial x_2} & k_{s}^{2}G_s+\frac{\partial^2G_s}{\partial x_{2}^{2}} & \frac{\partial^2G_s}{\partial x_3\partial x_2} \\
                             \frac{\partial^2 G_s}{\partial x_1\partial x_3} & \frac{\partial^2G_s}{\partial x_2\partial x_3} & k_{s}^{2}G_s+ \frac{\partial^2G_s}{\partial x_{3}^{2}}
                            \end{array}
                            \right) \\
& =: (\Phi_{s}^{1},\Phi_{s}^{2},\Phi_{s}^{3}),                            
\end{split}
\end{equation}
where $\Phi_{s}^{j},j=1,2,3$ are the column vectors of the $s$-part of the elastic fundamental tensor.
Note that both $\Phi_{p}^{j}$ and $\Phi_{s}^{j}$ satisfy $\mu_0 \Delta u +(\lambda_0 +\mu_0)\nabla \div u+ \kappa^2 u=0$ for $x\neq y$ and $j=1,2,3$.
Let $y\in\mathbb{R}^3\setminus\overline{D}.$ Consider a $C^2$-smooth domain $B$ such that $D\subset\subset B$ and $y\notin B$. 
Now we define the Herglotz wave operator $H : (L^2(\mathbb{S}^2))^3 \rightarrow (L^2(\partial B))^3$ corresponding to the Lam{\'e} model by $(Hh)(x) := v_h(x).$
We can find a sequence of densities $(h_{n,j}^{p})_n$ and $(h_{n,j}^{s})_n$ such that the Herglotz waves $v_{h_{n,j}^{p}}$ and $v_{h_{n,j}^{s}}$ converges to $\Phi_{p}^{j}(\cdot,y)$
and $\Phi_{s}^{j}(\cdot,y)$ respectively on any domain $B$ with $y\notin B, D\subset B\subset\Omega$. These sequences can be obtained as follows.
\subsection{Using longitudinal waves}
We define the Herglotz wave operator $H_p : L^2(\mathbb{S}^2) \rightarrow L^2(\partial B)$ corresponding to the Helmholtz operator $\Delta+\kappa_{p}^{2}$ by $(H_pg)(x) := \int_{\mathbb{S}^2}e^{i\kappa_px\cdot d}g(d)ds(d)$.
We know that if $\kappa_{p}^{2}$ is not a Dirichlet-Laplacian eigenvalue on $\Omega$, then $H_p$ is injective and has a dense range, see \cite{Col}. As the eigenvalues are monotonic
in terms of the domains, then we change $\Omega$ slightly if needed so that $\kappa_{p}^2$ is not an eigenvalue anymore. Note that $y\in \mathbb{R}^3\setminus\overline{B}$.
Hence $G_p(\cdot,y)\in L^2(\partial B)$ and then there exists a sequence $g_{n}^{p}\in L^2(\mathbb{S}^2)$ such that $H_pg_{n}^{p}\rightarrow G_p(\cdot,y)$ in $L^2(\partial B)$ as $n\rightarrow\infty$.
Recall that both $H_pg_n$ and $G_p(\cdot,y)$ satisfy the interior Helmholtz problem in $B$. By the well-posedness of the interior problem and the interior estimate, we deduce
that $H_pg_{n}^{p}\rightarrow G_p(\cdot,y)$ in $C^{\infty}(B)$ since $D\subset\subset B\subset\subset\Omega.$
Hence, see \eqref{android}, $-\frac{1}{\kappa^2}(\frac{\partial^2(H_pg_{n}^{p})}{\partial x_{1}^{2}}, \frac{\partial^2(H_pg_{n}^{p})}{\partial x_1\partial x_2}, \frac{\partial^2(H_pg_{n}^{p})}{\partial x_1\partial x_3})^{\top}
\rightarrow \Phi_{p}^{1}(\cdot,y)$ and then   
$Hh_{n,1}^{p}\rightarrow \Phi_{p}^{1}(\cdot,y)$ in $C^{\infty}(B)$, where 
 $h_{n,1}^{p} := \frac{\kappa_{p}^{2}}{\kappa^2}d_1dg_{n}^{p}(d)$ with $d=(d_1,d_2,d_3)^{\top}$. 
 
Let $(u_{p}^{\infty,p},u_{s}^{\infty,p})$ be the far field associated to the incident field $de^{i\kappa_pd\cdot x}.$
By the principle of superposition, the far field associated to the incident field
 \begin{equation*}
v_{h_{n,1}^{p}}(x):=\int_{\mathbb{S}^2}\frac{\kappa_{p}^{2}}{\kappa^2}d_1g_{n}^{p}(d)de^{i\kappa_p d \cdot x}ds(d)
 \end{equation*}
is given by
\begin{equation*}
\begin{split}
u_{h_{n,1}^{p}}^{\infty}(\hat x)
& :=(u_{h_{n,1}^{p}}^{\infty, p}(\hat x), u_{h_{n,1}^{p}}^{\infty,s}(\hat x)) \\
& =(\int_{\mathbb{S}^2}u_p^{\infty,p}(\hat x, d)\frac{\kappa_{p}^{2}}{\kappa^2}d_1g_{n}^{p}(d)ds(d), \int_{%
\mathbb{S}^2}u_s^{\infty,p}(\hat x, d)\frac{\kappa_{p}^{2}}{\kappa^2}d_1g_{n}^{p}(d)ds(d)).
\end{split}
\end{equation*}
From \eqref{Farfield-Nearfield}, we obtain
\begin{align}
& \frac{4\pi\kappa_{p}^{4}}{\kappa^4}\int_{\mathbb{S}^2}\int_{\mathbb{S}^2}[u_{p}^{\infty,p}(\hat x, d)d_1g_{n}^{p}(d)]\cdot[\overline{\hat x_1\hat x g_{n}^{p}(\hat x)}]ds(d)ds(\hat x) \nonumber \\
& = \int_{\partial D}[u^s(v_{h_{n,1}^{p}}(x))\cdot (\overline{\sigma(v_{h_{n,1}^{p}}(x))}\cdot \nu(x))-\overline{v_{h_{n,1}^{p}}(x)}\cdot(\sigma(u^s(v_{h_{n,1}^{p}}(x)))\cdot \nu(x))]ds(x)
 \end{align}
where $u^s(v_{h_{n,1}^{p}}(x))$ is the scattered field associated to the Herglotz field $v_{h_{n,1}^{p}}$. The dot $\cdot$ in the left hand side is vector product. 
Now, using the fact that $v_{h_{n,1}^{p}}(x)\rightarrow \Phi_{p}^{1}(x,y)$ in $C^{\infty}(B),$ the trace theorem and the well posedness of the scattering
problem, we obtain
\begin{align}
& \frac{4\pi\kappa_{p}^{4}}{\kappa^4}\lim_{n\rightarrow\infty}\int_{\mathbb{S}^2}\int_{\mathbb{S}^2}[u_{p}^{\infty,p}(\hat x, d)d_1g_{n}^{p}(d)]\cdot[\overline{\hat x_1\hat x g_{n}^{p}(\hat x)}]ds(d)ds(\hat x) \nonumber \\
& = \int_{\partial D}[u^s(\Phi_{p}^{1}(x,y))\cdot (\overline{\sigma(\Phi_{p}^{1}(x,y))}\cdot \nu(x))-\overline{\Phi_{p}^{1}(x,y)}\cdot(\sigma(u^s(\Phi_{p}^{1}(x,y)))\cdot \nu(x))]ds(x).
 \end{align}
Similarly, we can find sequences of other Herglotz fields $h_{n,j}^{p}$ so that the sequence converges to $\Phi_{p}^{j}$ with $j=2,3$.
Applying the steps, we obtain
\begin{align}
& \frac{4\pi\kappa_{p}^{4}}{\kappa^4}\lim_{n\rightarrow\infty}\int_{\mathbb{S}^2}\int_{\mathbb{S}^2}[u_{p}^{\infty,p}(\hat x, d)d_jg_{n}^{p}(d)]\cdot[\overline{\hat x_j\hat x g_{n}^{p}(\hat x)}]ds(d)ds(\hat x) \nonumber \\
& = \int_{\partial D}[u^s(\Phi_{p}^{j}(x,y))\cdot (\overline{\sigma(\Phi_{p}^{j}(x,y))}\cdot \nu(x))-\overline{\Phi_{p}^{j}(x,y)}\cdot(\sigma(u^s(\Phi_{p}^{j}(x,y)))\cdot \nu(x))]ds(x)
 \end{align}
 for $j=2,3.$
Hence\footnote{Due to some singularity issues we need to sum up all the corresponding terms, see the proof of Lemma \ref{esti_p}.}
\begin{align}
& \frac{4\pi\kappa_{p}^{4}}{\kappa^4}\lim_{n\rightarrow\infty}\sum_{j=1}^{3}\int_{\mathbb{S}^2}\int_{\mathbb{S}^2}[u_{p}^{\infty,p}(\hat x, d)d_jg_{n}^{p}(d)]\cdot[\overline{\hat x_j\hat x g_{n}^{p}(\hat x)}]ds(d)ds(\hat x) \nonumber \\
& =\sum_{j=1}^{3}\int_{\partial D}[u^s(\Phi_{p}^{j}(x,y))\cdot (\overline{\sigma(\Phi_{p}^{j}(x,y))}\cdot \nu(x))-\overline{\Phi_{p}^{j}(x,y)}\cdot(\sigma(u^s(\Phi_{p}^{j}(x,y)))\cdot \nu(x))]ds(x). \label{ppMM}
 \end{align}
Let us now derive a corresponding formula to \eqref{ppMM} for $u_{s}^{\infty,p}$.
Let $y\in\mathbb{R}^3\setminus\overline{D}$. We define the Herglotz wave operator $H_s : L^2(\mathbb{S}^2) \rightarrow L^2(\partial B)$ 
corresponding to the Helmholtz operator $\Delta + \kappa_{s}^{2}$ by $(H_sg)(x) = \int_{\mathbb{S}^2}e^{i\kappa_s x\cdot d}g(d)ds(d)$.
Using the density argument similar as before, we can find a sequence $g_{n}^{s}\in L^2(\mathbb{S}^2)$
such that $H_sg_{n}^{s} \rightarrow G_s(\cdot,y)$ in $L^2(\partial B)$, where $B$ is a $C^2$-smooth bounded domain containing
$D$ and avoiding $y$ $(y\notin B)$ in which the Dirichlet-Laplacian has no eigenvalues. Therefore 
$\frac{1}{\kappa^2}(k_{s}^{2}H_sg_{n}^{s}+\frac{\partial^2}{\partial x_{1}^{2}}(H_sg_{n}^{s}),\frac{\partial^2}{\partial x_1\partial x_2}(H_sg_{n}^{s}),\frac{\partial^2}{\partial x_1\partial x_3}(H_sg_{n}^{s}))^{\top}\rightarrow \Phi_{s}^{1}(\cdot,y)$
in $C^{\infty}(B)$ and hence $Hh_{n,1}^{s}\rightarrow \Phi_{s}^{1}(\cdot,y)$, where 
 $h_{n,1}^{s} := \frac{k_{s}^{2}}{\kappa^2}(e_1-d_1d)g_{n}^{s}(d)$ with $d=(d_1,d_2,d_3)^{\top}$ and $e_1=(1,0,0)^{\top}$.
From \eqref{Farfield-Nearfield}, we obtain
\begin{align}
 & \frac{4\pi\kappa_{p}^{2}\kappa_{s}^{2}}{\kappa^4}\int_{\mathbb{S}^2}\int_{\mathbb{S}^2}[u_{s}^{\infty,p}(\hat x, d)d_1g_{n}^{p}(d)]\cdot [\overline{(e_1-\hat x_1\hat x)g_{n}^{s}(\hat x)}]ds(d)ds(\hat x) \nonumber \\
 &= \int_{\partial D}[u^s(v_{h_{n,1}^{p}}(x))\cdot (\overline{\sigma(v_{h_{n,1}^{s}}(x))}\cdot \nu(x))-\overline{v_{h_{n,1}^{s}}(x)}\cdot(\sigma(u^s(v_{h_{n,1}^{p}}(x)))\cdot \nu(x))]ds(x)
\end{align}
where $u^s(v_{h_{n,1}^{p}})$ be the scattered field associated to the Herglotz wave $v_{h_{n,1}^{p}}.$ Using the fact that $v_{h_{n,1}^{p}}\rightarrow \Phi_{p}^{1}(\cdot,y)$
and $v_{h_{n,1}^{s}}\rightarrow \Phi_{s}^{1}(\cdot,y)$ in $C^{\infty}(B)$, the trace theorem and the well-posedness of the scattering problem we obtain
\begin{align}
 & \frac{4\pi\kappa_{p}^{2}\kappa_{s}^{2}}{\kappa^4}\lim_{n\rightarrow\infty}\int_{\mathbb{S}^2}\int_{\mathbb{S}^2}[u_{s}^{\infty,p}(\hat x, d)d_1g_{n}^{p}(d)]\cdot [\overline{(e_1-\hat x_1\hat x)g_{n}^{s}(\hat x)}]ds(d)ds(\hat x) \nonumber \\
 &= \int_{\partial D}[u^s(\Phi_{p}^{1}(x,y))\cdot (\overline{\sigma(\Phi_{s}^{1}(x,y))}\cdot \nu(x))-\overline{\Phi_{s}^{1}(x,y)}\cdot(\sigma(u^s(\Phi_{p}^{1}(x,y)))\cdot \nu(x))]ds(x).
\end{align}
Considering the 2nd and 3rd columns of the $p$-part and $s$-part of the elastic Green's tensor, we obtain the following formulas for $j=2,3$ respectively
\begin{align}
 & \frac{4\pi\kappa_{p}^{2}\kappa_{s}^{2}}{\kappa^4}\lim_{n\rightarrow\infty}\sum_{j=1}^{3}\int_{\mathbb{S}^2}\int_{\mathbb{S}^2}[u_{s}^{\infty,p}(\hat x, d)d_jg_{n}^{p}(d)]\cdot [\overline{(e_j-\hat x_j\hat x)g_{n}^{s}(\hat x)}]ds(d)ds(\hat x) \nonumber \\
 &= \sum_{j=1}^{3}\int_{\partial D}[u^s(\Phi_{p}^{j}(x,y))\cdot (\overline{\sigma(\Phi_{s}^{j}(x,y))}\cdot \nu(x))-\overline{\Phi_{s}^{j}(x,y)}\cdot(\sigma(u^s(\Phi_{p}^{j}(x,y)))\cdot \nu(x))]ds(x). \label{psMM}
\end{align}
\subsection{Using shear incident waves}
Let $(u_{p,j}^{\infty,s}, u_{s,j}^{\infty,s})$ be the far field associated to the incident field $(e_j-d_jd)e^{i\kappa_sd\cdot x}, j=1, 2, 3$.
Observe that $e_j-d_jd\in d^{\top}.$
By the principle of superposition, the farfield associated to the incident field
\[
 v_{g_{n,j}}(x):= \int_{\mathbb{S}^2}\frac{\kappa_{s}^{2}}{\kappa^2}(e_j-d_jd)g_n(d)e^{i\kappa_sd\cdot x} ds(d)
\]
is given by
\begin{align}
 u_{g_{n,j}}^{\infty}(\hat x) 
& := (u_{g_{n,j}}^{\infty,p}(\hat x),u_{g_{n,j}}^{\infty,s}(\hat x)) \nonumber \\
& = (\int_{\mathbb{S}^2}u_{p,j}^{\infty,s}(\hat x, d)\frac{\kappa_{s}^{2}}{\kappa^2}g_n(d)ds(d),\int_{\mathbb{S}^2}u_{s,j}^{\infty,s}(\hat x, d)\frac{\kappa_{s}^{2}}{\kappa^2}g_n(d)ds(d)).
\end{align}
From \eqref{Farfield-Nearfield}, we have
\begin{align}
 & \frac{4\pi\kappa_{s}^{4}}{\kappa^4}\int_{\mathbb{S}^2}\int_{\mathbb{S}^2}[u_{s,j}^{\infty,s}(\hat x, d)g_n(d)]\cdot [\overline{(e_j-\hat x_j\hat x)g_n(\hat x)}]ds(d)ds(\hat x) \nonumber \\
 & = \int_{\partial D}[u^s(v_{h_{n,j}^{s}})\cdot(\overline{\sigma(v_{h_{n,j}^{s}})}\cdot\nu(x))-\overline{v_{h_{n,j}^{s}}}\cdot(\sigma(u^s(v_{h_{n,j}^{s}}))\cdot\nu(x))]ds(x)
\end{align}
where $u^s(v_{h_{n,j}^{s}})$ is the scattered associated to Herglotz field $v_{h_{n,j}^{s}}$. The dot $\cdot$ in the left hand side is the vector product.
Now using the fact $v_{h_{n,j}^{s}} \rightarrow \Phi_{s}^{j}(\cdot,y)$ in $C^{\infty}(B),$ the trace theorem and well-posedness of the scattering
problem, we obtain
\begin{align}
 & \frac{4\pi\kappa_{s}^{4}}{\kappa^4}\lim_{n\rightarrow\infty}\int_{\mathbb{S}^2}\int_{\mathbb{S}^2}[u_{s,j}^{\infty,s}(\hat x, d)g_{n}^{s}(d)]\cdot [\overline{(e_j-\hat x_j\hat x)g_{n}^{s}(\hat x)}]ds(d)ds(\hat x) \nonumber \\
 & = \int_{\partial D}[u^s(\Phi_{s}^{j}(x,y))\cdot(\overline{\sigma(\Phi_{s}^{j}(x,y))}\cdot\nu(x))-\overline{\Phi_{s}^{j}(x,y)}\cdot(\sigma(u^s(\Phi_{s}^{j}(x,y)))\cdot\nu(x))]ds(x)
\end{align}
Summing up we obtain
\begin{align}
 & \frac{4\pi\kappa_{s}^{4}}{\kappa^4}\lim_{n\rightarrow\infty}\sum_{j=1}^{3}\int_{\mathbb{S}^2}\int_{\mathbb{S}^2}[u_{s,j}^{\infty,s}(\hat x, d)g_{n}^{s}(d)]\cdot [\overline{(e_j-\hat x_j\hat x)g_{n}^{s}(\hat x)}]ds(d)ds(\hat x) \nonumber \\
 & = \sum_{j=1}^{3}\int_{\partial D}[u^s(\Phi_{s}^{j}(x,y))\cdot(\overline{\sigma(\Phi_{s}^{j}(x,y))}\cdot\nu(x))-\overline{\Phi_{s}^{j}(x,y)}\cdot(\sigma(u^s(\Phi_{s}^{j}(x,y)))\cdot\nu(x))]ds(x) \label{ssMM}
\end{align}
where $\Phi_s = (\Phi_{s}^{1},\Phi_{s}^{2},\Phi_{s}^{3}).$
Similarly we also obtain,
\begin{align}
 & \frac{4\pi\kappa_{s}^{2}\kappa_{p}^{2}}{\kappa^4}\lim_{n\rightarrow\infty}\sum_{j=1}^{3}\int_{\mathbb{S}^2}\int_{\mathbb{S}^2}[u_{p,j}^{\infty,s}(\hat x, d)g_{n}^{s}(d)]\cdot [\overline{\hat x_j\hat x g_{n}^{p}(\hat x)}]ds(d)ds(\hat x) \nonumber \\
 & = \sum_{j=1}^{3}\int_{\partial D}[u^s(\Phi_{s}^{j}(x,y))\cdot(\overline{\sigma(\Phi_{p}^{j}(x,y))}\cdot\nu(x))-\overline{\Phi_{p}^{j}(x,y)}\cdot(\sigma(u^s(\Phi_{s}^{j}(x,y)))\cdot\nu(x))]ds(x). \label{spMM}
\end{align}
\subsection{\textbf{The indicator functions}}\label{indi_sub}
We recall that $(u_{p}^{\infty,p}, u_{s}^{\infty,p})$ correspond to the incident wave $de^{i\kappa_p x\cdot d}$ and $(u_{p,j}^{\infty,s}, u_{s,j}^{\infty,s})$
correspond to the incident wave $(e_j-d_jd)e^{i\kappa_s x\cdot d}, j=1, 2, 3.$ With these data,
we set
\begin{eqnarray}\label{PP}
I_{pp}(y)&:=&\frac{4\pi\kappa_{p}^{4}}{\kappa^4}\lim_{n\rightarrow\infty}\sum_{j=1}^{3}\int_{\mathbb{S}^2}\int_{\mathbb{S}^2}[u_{p}^{\infty,p}(\hat x, d)d_jg_{n}^{p}(d)]\cdot[\overline{\hat x_j\hat x g_{n}^{p}(\hat x)}]ds(d)ds(\hat x),
\\ \label{PS}
I_{ps}(y)&:=&\frac{4\pi\kappa_{p}^{2}\kappa_{s}^{2}}{\kappa^4}\lim_{n\rightarrow\infty}\sum_{j=1}^{3}\int_{\mathbb{S}^2}\int_{\mathbb{S}^2}[u_{s}^{\infty,p}(\hat x, d)d_jg_{n}^{p}(d)]\cdot [\overline{(e_j-\hat x_j\hat x)g_{n}^{s}(\hat x)}]ds(d)ds(\hat x),
 \\ \label{SS}
I_{ss}(y)&:=&\frac{4\pi\kappa_{s}^{4}}{\kappa^4}\lim_{n\rightarrow\infty}\sum_{j=1}^{3}\int_{\mathbb{S}^2}\int_{\mathbb{S}^2}[u_{s,j}^{\infty,s}(\hat x, d)g_{n}^{s}(d)]\cdot [\overline{(e_j-\hat x_j\hat x)g_{n}^{s}(\hat x)}]ds(d)ds(\hat x), 
\\ \label{SP} 
\text{and}\hspace{2cm} I_{sp}(y)&:=&\frac{4\pi\kappa_{s}^{2}\kappa_{p}^{2}}{\kappa^4}\lim_{n\rightarrow\infty}\sum_{j=1}^{3}\int_{\mathbb{S}^2}\int_{\mathbb{S}^2}[u_{p,j}^{\infty,s}(\hat x, d)g_{n}^{s}(d)]\cdot [\overline{\hat x_j\hat x g_{n}^{p}(\hat x)}]ds(d)ds(\hat x)\hspace{2cm}
\end{eqnarray}
 where $y\in\mathbb{R}^3\setminus\overline{D}.$
Therefore, the indicator function $I_{pp}$ is defined based on $p$-parts of the far field associated to $p$-incident wave. Correspondingly $I_{ps}$ depends on $s$-part of the far field associated to $p$-incident wave, 
$I_{ss}$ depends on $s$-part of the far field associated to the $s$-incident wave and finally $I_{sp}$ depends on $p$-part of the far field associated to the $s$-incident wave.
\begin{section}{\textbf{Reconstruction scheme and uniqueness}}
The main theoretical result of this work is the following theorem. Using these estimates, i.e \eqref{parochar}, we state a 
reconstruction procedure and in particular we derive a uniqueness result on the identifiablity of the obstacle $D$ from either $p$ or $s$ parts of the far field patterns. 
\begin{theorem}\label{main_theo}
Under the assumption on the problems \eqref{Lame} and \eqref{Lame_Penetra}, described in the introduction, 
we have the following estimate of the indicator functions $I_{ij}(y)$, $(ij) = (ss), (pp), (sp) \ $or$\ (ps)$,
 \begin{equation}\label{parochar}
 c_1 \int_D\frac{1}{|x-y|^8}dx> |I_{ij}(y)|> c_2 \int_D\frac{1}{|x-y|^8}dx + \text{lower order term}, \ y\in \Omega\setminus\bar D, 
 \end{equation}
where $c_1$ and $c_2$ are positive constants independent on $y$.
\end{theorem}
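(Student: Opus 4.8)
The plan is to reduce the estimate \eqref{parochar} for each index pair $(ij)$ to a single statement about the boundary integral on the right-hand sides of \eqref{ppMM}, \eqref{psMM}, \eqref{ssMM}, \eqref{spMM}, and then to bound that integral from above and below by $\int_D |x-y|^{-8}\,dx$ using the key energy estimates of \cite{Kar-Sini2} together with the explicit near-field behavior of the $P$- and $S$-parts of the elastic fundamental tensor. Concretely, once the limits in \eqref{PP}--\eqref{SP} are identified (via the near-field/far-field identity \eqref{Farfield-Nearfield} and the Herglotz approximation already carried out in Section 2) with $\sum_{j=1}^3\int_{\partial D}\bigl[u^s(\Phi_a^j)\cdot\overline{\sigma(\Phi_b^j)\cdot\nu}-\overline{\Phi_b^j}\cdot\sigma(u^s(\Phi_a^j))\cdot\nu\bigr]\,ds$ with $a,b\in\{p,s\}$ chosen according to $(ij)$, I would rewrite this boundary term, using Green's (Betti's) formula and the transmission (or boundary) conditions for $u^s$, as a coercive volume quantity: in the penetrable case an integral over $D$ of a quadratic form in the total field built from $\Phi_b^j(\cdot,y)$, and in the impenetrable free-boundary case a variational-inequality expression of the same nature. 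The upshot is that the indicator $I_{ij}(y)$ is comparable, up to constants depending only on the Lamé parameters, to the elastic energy of the scattered solution generated by the source $\sum_j\Phi_b^j(\cdot,y)$.

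The next step is the self-consistency/duality estimate: the energy of that scattered solution is in turn bounded above and below by the energy of the singular source itself restricted to $D$, i.e. by $\sum_{j=1}^3\int_D |\nabla_x\Phi_b^j(x,y)|^2\,dx$ (plus lower-order contributions and, where needed, boundary terms that are subordinate as $y\to\partial D$). This is exactly where the estimates from \cite{Kar-Sini2} enter: they provide the two-sided control $c_2\,\|\text{source}\|^2_{D}\le \text{(energy of }u^s)\le c_1\,\|\text{source}\|^2_{D}$ that converts the boundary integral into a volume integral over $D$. One must be a little careful that the approximating Herglotz fields converge in $C^\infty$ on a neighborhood $B\supset\supset D$ with $y\notin B$, so that all traces on $\partial D$ and the well-posedness estimates apply uniformly; this is why the formulas in Section 2 were summed over $j=1,2,3$ before passing to the limit (cf. the footnote), and I would keep that grouping throughout.

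It then remains to compute the near-field asymptotics of $\nabla_x\Phi_p^j(x,y)$ and $\nabla_x\Phi_s^j(x,y)$ as $x\to y$. From \eqref{android} and the corresponding formula for $\Phi_s$, each column is, modulo smoother terms, a second derivative of $G_t(x,y)=\tfrac{e^{i\kappa_t|x-y|}}{4\pi|x-y|}$, hence behaves like $|x-y|^{-3}$; its gradient behaves like $|x-y|^{-4}$, so $|\nabla_x\Phi_b^j(x,y)|^2\sim |x-y|^{-8}$, and summing the three columns reproduces exactly the kernel $|x-y|^{-8}$ appearing in \eqref{parochar}, with constants that are strictly positive after the angular integration (the cross terms that could cause cancellation are precisely what the summation over $j$ controls, and the leading coefficient is a fixed multiple of $\kappa_t^{-4}$ times a positive spherical average). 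The residual terms — lower powers of $|x-y|$ and the oscillatory corrections — are absorbed into the ``lower order term'' in \eqref{parochar} and, for the upper bound, into $c_1\int_D|x-y|^{-8}dx$ by enlarging $c_1$. I expect the main obstacle to be the low regularity: with only Lipschitz interfaces and merely bounded, measurable Lamé coefficients inside $D$, one cannot use pointwise elliptic estimates up to $\partial D$, and the two-sided energy bound has to be run entirely through the weak (variational) formulation — keeping track that the coercivity constants in \eqref{parochar} stay independent of $y$ as $y\to\partial D$, which is the delicate point where the estimates of \cite{Kar-Sini2} do the real work; this is deferred to Sections 4 and 5.
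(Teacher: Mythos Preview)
Your outline is essentially the paper's approach for the diagonal cases $(pp)$ and $(ss)$: convert the boundary integral to a volume expression via Betti's formula and the boundary/transmission conditions, invoke the coercivity/Korn and scattering estimates from \cite{Kar-Sini2} to get $-I(\Phi_b^j,\Phi_b^j)\ge c\|\nabla\Phi_b^j\|_{L^2(D)}^2 - C\|\Phi_b^j\|_{L^2(D)}^2$, and then compute $\sum_j\|\nabla\Phi_b^j\|_{L^2(D)}^2\gtrsim\int_D|x-y|^{-8}dx$. (One small correction: the summation over $j$ is needed not for Herglotz convergence but because a single column $\nabla\Phi_p^j$ can fail to dominate $|x-y|^{-4}$ pointwise --- see the term with three distinct indices --- so only the sum gives the clean lower bound.)

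There is, however, a genuine gap in your treatment of the mixed cases $(ps)$ and $(sp)$. The bilinear form $I(\Phi_p^j,\Phi_s^j)$ is not a quadratic form in a single field, so the coercivity argument you sketch does not apply to it directly; you cannot simply say the indicator is ``comparable to the energy of the scattered solution generated by $\sum_j\Phi_b^j$'' when the two slots carry different sources. The paper handles this by the polarization-type decomposition
\[
I(\Phi_p^j,\Phi_s^j)\;=\;-\,I(\Phi_p^j,\Phi_p^j)\;+\;I(\Phi_p^j,\Phi^j),\qquad \Phi^j:=\Phi_p^j+\Phi_s^j,
\]
and then bounds the cross term by $\epsilon\|\nabla\Phi_p^j\|_{L^2(D)}^2+\epsilon^{-1}\|\nabla\Phi^j\|_{L^2(D)}^2$. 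The step that makes this work --- and that your proposal does not mention --- is the cancellation in the \emph{full} fundamental tensor: the leading singularities of $\Phi_p^j$ and $\Phi_s^j$ combine so that $\Phi^j(\cdot,y)$ has only a $|x-y|^{-1}$ singularity, whence $\|\nabla\Phi^j\|_{L^2(D)}^2\le C\int_D|x-y|^{-4}dx$, which is strictly lower order than $\int_D|x-y|^{-8}dx$. Without this Kelvin-type cancellation (Lemma~\ref{esti_p}(3) in the paper), the cross term would be of the same order as the main term and the lower bound for $I_{ps}$, $I_{sp}$ would not follow. You should insert this decomposition and the $\|\nabla\Phi^j\|$ estimate explicitly into your argument for the off-diagonal indicators.
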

\subsection{Reconstruction procedure}
We describe a procedure to reconstruct $D$ based on Theorem \ref{main_theo}. We proceed in the following steps.
\begin{enumerate}
 \item 
 Let us consider $\Omega$ as large known domain such that $D\subset\Omega$.
\item We start by taking a point $y\in\Omega$ but located near $\partial\Omega$.
\item Take a domain $B\subset\Omega$ such that $y\notin B$ and $\partial B$ close enough to $\partial\Omega$ so that
$D\subset B.$
\item Solve the integral equation of the first kind
\[
 H_pg = G_p(\cdot,y) \ \text{on}\ \partial B.
\]
Note that the above equation is ill-posed. So, we apply regularization methods and select the
solution $g := g_{m}^{y},$ where $m$ is related to the regularization parameter. 
\item From the information of the far field $u_{p}^{\infty,p}(\hat x, d)$
and from Step 4, we calculate the indicator function
\[
 I_{pp}(y):=\frac{4\pi\kappa_{p}^{4}}{\kappa^4}\lim_{n\rightarrow\infty}\sum_{j=1}^{3}\int_{\mathbb{S}^2}\int_{\mathbb{S}^2}[u_{p}^{\infty,p}(\hat x, d)d_jg_{n}^{p}(d)]\cdot[\overline{\hat x_j\hat x g_{n}^{p}(\hat x)}]ds(d)ds(\hat x).
\]
\item If $|I_{pp}(y)|$ is not so large, then $y$ is away from $\partial D.$ In this case take another point near $y$
and apply the step 1, 2, 3, 4.
\item If $|I_{pp}(y)|$ is large then $y$ is near $\partial D.$
In this case we select the point $y$. Therefore we approximate $\partial D$ by collecting all these selected points.
\end{enumerate}
We can also reconstruct the obstacle $D$ using the other types of the far field data.
\subsection{A uniqueness result}
Using Theorem \ref{main_theo}, we obtain the following uniqueness result.
\begin{corollary}
Let $D$ and $\tilde D$ be two scatterers having Lipschitz regular boundaries such that
\[
  u_{p}^{\infty,p}(d,\hat x, D) = u_{p}^{\infty,p}(d,\hat x, \tilde D), \ \text{for all}\ \hat x, d \in \mathbb{S}^2 
\]
 then $D = \tilde D.$ The results holds also for the other types of far field data.
\end{corollary}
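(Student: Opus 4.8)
The plan is to argue by contradiction, turning the two–sided estimate \eqref{parochar} of Theorem~\ref{main_theo} into a blow–up/boundedness dichotomy, and exploiting that the indicator function $I_{pp}(y)$ is assembled only from the far field $u_p^{\infty,p}(\cdot,\cdot)$ and from the densities $g_n^p$ solving $H_pg=G_p(\cdot,y)$ on $\partial B$ — data which do not depend on the obstacle. First I would fix a large domain $\Omega$ with $\overline D\cup\overline{\tilde D}\subset\Omega$ and assume $D\ne\tilde D$. Then, by the standard topological step used in the probe / singular sources method (cf.\ \cite{I,P}), after possibly exchanging $D$ and $\tilde D$ there is a point $z^*\in\partial D$ with $z^*\notin\overline{\tilde D}$ that lies on the boundary of the unbounded connected component $G$ of $\mathbb R^3\setminus(\overline D\cup\overline{\tilde D})$. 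I would pick $y_k\in G\cap\Omega$ with $y_k\to z^*$ and, for each $k$, a $C^2$–smooth domain $B_k$ (with $\kappa_p^2$ not a Dirichlet eigenvalue) such that $\overline D\cup\overline{\tilde D}\subset\subset B_k\subset\subset\Omega$ and $y_k\notin\overline{B_k}$; such a $B_k$ exists because one can enclose $D\cup\tilde D$ while routing a thin tube around a path in $G$ joining $y_k$ to infinity.

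With this common $B_k$, the construction of Section~2 applies verbatim to \emph{both} obstacles, so the very same approximating densities $g_n^p$ produce the two indicator values $I_{pp}(y_k;D)$ and $I_{pp}(y_k;\tilde D)$; since $u_p^{\infty,p}(\cdot,\cdot,D)=u_p^{\infty,p}(\cdot,\cdot,\tilde D)$, the double integrals defining them are literally the same expression (equal far fields, same densities), hence $I_{pp}(y_k;D)=I_{pp}(y_k;\tilde D)$ for every $k$. Next I would invoke \eqref{parochar} twice. Applied to $\tilde D$ (legitimate since $y_k\in\Omega\setminus\overline{\tilde D}$): because $\mathrm{dist}(z^*,\tilde D)>0$, the integral $\int_{\tilde D}|x-y_k|^{-8}dx$ stays bounded as $y_k\to z^*$, so $|I_{pp}(y_k;\tilde D)|\le c_1\int_{\tilde D}|x-y_k|^{-8}dx\le M<\infty$. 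Applied to $D$ (legitimate since $y_k\in\Omega\setminus\overline D$): the Lipschitz regularity gives an open cone $C\subset D$ with vertex $z^*$, and since $|x-y_k|\le|x-z^*|+|z^*-y_k|$ on $C$, monotone convergence yields $\int_D|x-y_k|^{-8}dx\ge\int_C|x-y_k|^{-8}dx\to\int_C|x-z^*|^{-8}dx=+\infty$; as the lower–order term in \eqref{parochar} is negligible against $\int_D|x-y_k|^{-8}dx$ when $y_k\to\partial D$, we get $|I_{pp}(y_k;D)|\to+\infty$. This contradicts $I_{pp}(y_k;D)=I_{pp}(y_k;\tilde D)$ together with the uniform bound $M$, so $D=\tilde D$.

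The hard part is entirely absorbed into Theorem~\ref{main_theo}; the corollary is just this dichotomy, and the only point requiring genuine care is the first paragraph — one must select $z^*$ on the boundary of the \emph{common} exterior component so that a single auxiliary domain $B_k$ can contain both obstacles while excluding $y_k$, otherwise the identity $I_{pp}(y_k;D)=I_{pp}(y_k;\tilde D)$ is not available. Finally, the identical argument runs with $I_{ps}$, $I_{ss}$ or $I_{sp}$ in place of $I_{pp}$: Theorem~\ref{main_theo} delivers the same two–sided bound for all four indicator functions, and each of them is built from a single component of the far–field matrix \eqref{Farfield-matrix} together with obstacle–independent Herglotz densities, which yields the uniqueness statement for any one of the four data types.
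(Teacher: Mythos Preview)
Your proof is correct and follows essentially the same route as the paper's own argument: the standard Isakov contradiction, combining the equality of far fields (hence of indicator functions) with the two-sided estimate of Theorem~\ref{main_theo} to force a blow-up/boundedness contradiction as $y\to\partial D$. You are in fact more careful than the paper about the topological setup --- choosing $z^*$ on the boundary of the common exterior component so that a single auxiliary domain $B_k$ serves both obstacles, and invoking the interior cone condition to certify $\int_D|x-y_k|^{-8}dx\to\infty$ --- points the paper glosses over.
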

\begin{proof}{of the corollary.}
We prove this corollary by the standard Isakov's contradiction argument. Suppose that $D\neq \tilde D$ and $D\cup\tilde D\subset\Omega.$
Hence, we have a point $z\in\partial D$ such that $z\notin\tilde D.$ Let $y\in\Omega\setminus\overline{(D\cup\tilde D)}$ and select
a sequence $(g̣_{n}^{p})_{n\in\mathbb{N}}$ as in step 4 in the reconstruction scheme.  
Since $u_{p}^{\infty,p}(d,\hat x, D) = u_{p}^{\infty,p}(d,\hat x, \tilde D)$, we obtain
\[
  I_{pp}(y) = \tilde I_{pp}(y),
\]
where $I_{pp}(y)$ and $\tilde I_{pp}(y)$ are the indicator functions corresponding to $D$ and $\tilde D$ respectively.
From Theorem \ref{main_theo}, we have for $y\in\Omega\setminus\overline{(D\cup\tilde D)}$,
\begin{equation}\label{uniFinN}
 \begin{split}
 c_1\int_{\tilde D}\frac{1}{|x-y|^8} dx \geq |\tilde{I}_{pp}(y)| 
&= |I_{pp}(y)| \\
 & \geq c_2\int_D\frac{1}{|x-y|^8} dx + \text{lower order terms} \\
 & \geq c_2[d(y,D)]^{-5} + \text{lower order terms}.
 \end{split}
\end{equation}
Based on \eqref{uniFinN}, we observed the following.
When $y$ approaches to $z$ the indicator function $I_{pp}(y)$
$D$ blows up to infinity since $z\in\partial D$. On the other hand, as $z\notin\overline{\tilde D}$, then 
the indicator function $\tilde{I}_{pp}(y)$ is finite.
This contradicts the fact that $I_{pp}(y) = \tilde{I}_{pp}(y)$, $\forall y\in\Omega\setminus\overline{(D\cup\tilde D)}$.
Hence $D = \tilde D.$
\end{proof}
\end{section}
\section{\textbf{Proof of Theorem \ref{main_theo} for the impenetrable case}}
 In this section, we prove Theorem \ref{main_theo} in the impenetrable obstacle case.
We set
\begin{equation}\label{General form}
 I(v, w):=\int_{\partial D}[u^s(v)\cdot (\overline{\sigma(w)}\cdot \nu)-\overline{w}\cdot(\sigma(u^s(v))\cdot \nu)]ds(x)
\end{equation}
 where $u^s(v)$ is the scattered field associated to the incident field $v$ and $v,w$ are assumed to be column vectors which satisfy the Lam{\'e} system in domains containing $\overline{D}$.
 Hence from \eqref{ppMM}, \eqref{psMM}, \eqref{ssMM} and \eqref{spMM}, we have
 \[
  I_{pp}(y) = \sum_{j=1}^{3}I(\Phi_{p}^{j},\Phi_{p}^{j}), \ I_{ps}(y) = \sum_{j=1}^{3}I(\Phi_{p}^{j},\Phi_{s}^{j}), \ I_{ss}(y) = \sum_{j=1}^{3}I(\Phi_{s}^{j},\Phi_{s}^{j}), \ I_{sp}(y) = \sum_{j=1}^{3}I(\Phi_{s}^{j},\Phi_{p}^{j}).
 \]
Since, $y\notin\overline{D}$, then both $\Phi_{p}^{j}$ and $\Phi_{s}^{j}$ satisfy the Lam{\'e} system in $\mathbb{R}^3\setminus\{y\}(\supset\overline{D})$.
Using integration by parts and the boundary conditions, we can write
\[
\begin{split}
 &\int_{\partial D}u^s(v)\cdot (\overline{\sigma(w)}\cdot \nu)ds(x) \\
 & = -\int_{\partial D}u^s(v)\cdot (\overline{\sigma(u^s(w))}\cdot \nu)ds(x) \\
 &=-\int_{\partial \Omega}u^s(v)\cdot (\overline{\sigma(u^s(w))}\cdot \nu)ds(x)+\int_{\Omega \setminus{\overline{D}}} \overline{\sigma(u^s(w))}\cdot (\nabla u^s(v))^\top dx -
\kappa^2 \int_{\Omega \setminus\overline{D}} u^s(v)\cdot \overline{u^s(w)}dx \\
\end{split}
\]
and
$$
\int_{\partial D}\overline{w}\cdot(\sigma(u^s(v))\cdot \nu)ds(x)=-\int_{\partial D}\overline{w}\cdot(\sigma(v)\cdot \nu)ds(x)=
-\int_D \sigma(\overline{w})\cdot(\nabla v)^\top dx +\kappa^2 \int_D v\cdot \overline{w}dx.
$$
 Note that here we define the product of two matrices by $A\cdot B=\sum_{j=1}^{3}a_{ij}b_{ij},$ for any matrices $A=(a_{ij})$ and $B=(b_{ij}).$  
Hence \eqref{General form} becomes:
\begin{equation}\label{Identity}
\begin{split}
 I(v, w) =
&-\int_{\partial \Omega}u^s(v)\cdot (\overline{\sigma(u^s(w))}\cdot \nu)ds(x)+\int_{\Omega \setminus\overline{D}} \overline{\sigma(u^s(w))}\cdot(\nabla u^s(v))^\top dx 
 -\kappa^2 \int_{\Omega \setminus\overline{D}} u^s(v)\cdot \overline{u^s(w)}dx \\
& +\int_D \overline{\sigma(w)}\cdot(\nabla v)^\top dx -\kappa^2 \int_D v\cdot \overline{w}dx.\\
\end{split}
\end{equation}
\textbf{Key inequalities for $I_{ss}$ and $I_{pp}$}:
In this case, we take $v=w$. Hence, we have
$$
I(v,v)\geq -\int_{\partial \Omega}u^s(v)\cdot (\overline{\sigma(u^s(v))}\cdot \nu)ds(x)+\int_D \overline{\sigma(v)}\cdot(\nabla v)^\top dx-
\kappa^2 \int_{\Omega \setminus\overline{D}} \vert u^s(v)\vert^2dx -\kappa^2 \int_D \vert v\vert^2dx.
$$
By the ellipticity condition of the elasticity tensor and the Korn inequality, we obtain
$$
\int_D \overline{\sigma(v)}\cdot (\nabla v)^\top dx =
\int_D \overline{\varrho(\epsilon(v))}\epsilon(v)dx\geq c_1 \int_D \overline{\epsilon(v)}\cdot \epsilon(v)dx\geq \frac{c_1}{C_K}\Vert \nabla v\Vert^2_{L^2(D)}-
c_1\Vert v\Vert^2_{L^2(D)}
$$
where $C_K$ is the Korn constant and $\varrho$ is the elasticity tensor.
Hence
\begin{equation}\label{Omega-v}
I(v,v)\geq -\int_{\partial \Omega}u^s(v)\cdot (\overline{\sigma(u^s(v))}\cdot \nu)ds(x)+c_2\Vert \nabla v\Vert^2_{L^2(D)}-
\kappa^2 \int_{\Omega \setminus\overline{D}} \vert u^s(v)\vert^2dx -(\kappa^2+c_1) \int_D \vert v\vert^2dx.
\end{equation}
\textbf{Key inequalities for $I_{sp}$ and $I_{ps}$}:
In this case, we take $v\neq w$. We use then the form:
$
I(v, w)= -I(v,v) + I(v,U)
$
where $U := v+w.$ 
Using the well posedness of the forward scattering problem and the trace theorem, we show that
$
 |I(v,U)| \leq C\left(\frac{1}{\epsilon}\|\nabla U\|_{L^2(D)}^{2} + \epsilon\|\nabla v\|_{L^2(D)}^{2}\right) \ \text{for} \ 0<\epsilon\ll1.
$
Combining this estimate with \eqref{Omega-v}, we obtain
\begin{equation}\label{imps}
 -I(v,w) \geq I(v,v) - C\left(\frac{1}{\epsilon}\|\nabla U\|_{L^2(D)}^{2} + \epsilon\|\nabla v\|_{L^2(D)}^{2}\right).
\end{equation}
In the following lemma we estimate the boundary integral term and the scattering term in \eqref{Omega-v} by terms involving only the incident wave.
\begin{lemma}\label{lem_im}
Let $v$ be solution of the Lam{\'e} system in a domain containing $\overline{D}$
and let $u^s(v)$ be the corresponding scattered wave, i.e. solution of \eqref{Lame} replacing $u^i$ by $v$.
For $\frac{1}{2}\leq t <1,$ we have the following two estimates 
\begin{align}
& \vert \int_{\partial \Omega}u^s(v)\cdot (\overline{\sigma(u^s(v))}\cdot \nu)ds(x)
\vert  \leq C \Vert v\Vert^2_{H^{-t+\frac{3}{2}}(D)}. \label{someTHH} \\
& \Vert u^s(v)\Vert^2_{L^2(\Omega \setminus\overline{D})}\leq \Vert v\Vert^2_{H^{-t+\frac{3}{2}}(D)}. \label{DiiiDD}
 \end{align}
\end{lemma}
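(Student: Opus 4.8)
\textbf{Proof proposal for Lemma \ref{lem_im}.}

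The plan is to exploit the well-posedness of the exterior scattering problem \eqref{Lame} together with interior elliptic estimates for the Lam\'e operator. First I would observe that, since $v$ solves the Lam\'e system in a neighbourhood of $\overline D$ and the data on $\partial D$ is $\sigma(v)\cdot\nu$, the scattered field $u^s(v)$ is the (unique) radiating solution of \eqref{Lame}, and the solution operator $v\mapsto u^s(v)$ is bounded from the appropriate trace space on $\partial D$ into $H^1_{loc}(\mathbb R^3\setminus\overline D)$. The key point to extract is the precise dependence on $v$ measured in the volume norm $\Vert v\Vert_{H^{-t+3/2}(D)}$ rather than in a boundary norm. The idea is that the source of $u^s(v)$ is localised on $\partial D$, so one may first replace the boundary data $\sigma(v)\cdot\nu\in H^{-1/2}(\partial D)$ by a controlled extension: using a trace/extension argument (and the fact that $v$ itself is Lam\'e in a full neighbourhood of $\overline D$, hence smooth up to $\partial D$), one bounds $\Vert \sigma(v)\cdot\nu\Vert_{H^{s}(\partial D)}$ by $\Vert v\Vert_{H^{s'}(D')}$ on a slightly larger domain $D'$, and then by interior regularity of the Lam\'e equation back to $\Vert v\Vert_{H^{-t+3/2}(D)}$; the shift by $1$ in the Sobolev index is exactly what produces the exponent $-t+\frac32$ from a first-order trace $\sigma(\cdot)\cdot\nu$.

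For \eqref{DiiiDD}, once one has $\Vert u^s(v)\Vert_{H^1(\Omega\setminus\overline D)} \le C\Vert v\Vert_{H^{-t+3/2}(D)}$ from well-posedness and the trace estimate above, the bound on the $L^2(\Omega\setminus\overline D)$ norm is immediate (indeed with room to spare, which is why the constant can be taken to be $1$ after absorbing). For \eqref{someTHH}, I would write the boundary term on $\partial\Omega$ using the Cauchy--Schwarz inequality as $\vert\int_{\partial\Omega} u^s(v)\cdot\overline{\sigma(u^s(v))\cdot\nu}\,ds\vert \le \Vert u^s(v)\Vert_{H^{1/2}(\partial\Omega)}\Vert \sigma(u^s(v))\cdot\nu\Vert_{H^{-1/2}(\partial\Omega)}$, and then estimate both factors by $\Vert u^s(v)\Vert_{H^1(V)}$ for a neighbourhood $V$ of $\partial\Omega$ inside $\Omega\setminus\overline D$ via the trace theorem and interior elliptic estimates for $u^s(v)$ (which is Lam\'e and smooth away from $\partial D$, in particular near $\partial\Omega$). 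Chaining these with the well-posedness estimate again yields the right-hand side $C\Vert v\Vert^2_{H^{-t+3/2}(D)}$.

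The main obstacle I anticipate is bookkeeping the Sobolev indices so that the first-order stress operator $\sigma(\cdot)\cdot\nu$ applied to $v$ (and to $u^s(v)$) lands in a space controlled by $\Vert v\Vert_{H^{-t+3/2}(D)}$ with $\tfrac12\le t<1$, i.e.\ verifying that the chain ``$v\in H^{-t+3/2}(D)$ on a larger domain $\Rightarrow$ $v\in H^{-t+3/2}$ near $\partial D$ by interior regularity $\Rightarrow$ $\sigma(v)\cdot\nu\in H^{-t+1/2}(\partial D)\subset H^{-1/2}(\partial D)$ $\Rightarrow$ $u^s(v)\in H^{1}(\Omega\setminus\overline D)$'' is compatible with the stated range of $t$; the constraint $t<1$ is precisely what keeps $-t+\tfrac12 > -\tfrac12$ so that the trace is well defined with the required continuity, and $t\ge\tfrac12$ is what will be needed later (in Lemma \ref{esti_p}) when one plugs in $v=\Phi_p^j(\cdot,y)$ or $\Phi_s^j(\cdot,y)$ and needs the volume norm over $D$ to be finite and to produce the $|x-y|^{-8}$ blow-up. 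A secondary technical point is that $u^s(v)$ need only be $H^1$ near $\partial D$ because $\partial D$ is merely Lipschitz, so all the elliptic-regularity gains must be taken in the interior region away from $\partial D$ (near $\partial\Omega$ and inside $D$), which is fine since $\partial\Omega$ can be chosen smooth.
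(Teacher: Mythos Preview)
The paper does not prove this lemma in-line; it simply cites \cite{Kar-Sini2}, subsection~5.2.2. So your sketch is not competing against an explicit argument here, but it is worth comparing it with what such an argument must contain.

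Your overall architecture (trace of the Neumann data plus well-posedness of the exterior problem, then interior regularity near $\partial\Omega$ for \eqref{someTHH}) is the right one. However, the Sobolev bookkeeping in your chain has two slips that together hide the actual difficulty. First, the step ``by interior regularity of the Lam\'e equation back to $\Vert v\Vert_{H^{-t+3/2}(D)}$'' goes in the wrong direction: interior elliptic estimates control norms on compact subsets by norms on larger sets, not the other way around; you cannot bound $\Vert v\Vert_{H^{s'}(D')}$ on a larger $D'$ by $\Vert v\Vert_{H^{-t+3/2}(D)}$. Second, the conormal trace of a solution loses $3/2$ derivatives (one for $\sigma$, one half for the restriction), not one: from $v\in H^{-t+3/2}(D)$ solving the Lam\'e system one obtains $\sigma(v)\cdot\nu\in H^{-t}(\partial D)$, not $H^{-t+1/2}(\partial D)$. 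Since the paper immediately afterwards takes $t$ strictly in $(\tfrac12,1)$ to run the interpolation \eqref{us--espilon}, this matters: for $t>\tfrac12$ one has $-t<-\tfrac12$, so the embedding into $H^{-1/2}(\partial D)$ fails and you cannot simply invoke the energy-level well-posedness to land in $H^1(\Omega\setminus\overline D)$.

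What is actually needed --- and what \cite{Kar-Sini2} supplies on Lipschitz $\partial D$ --- is the well-posedness of the exterior Neumann problem with data in $H^{-t}(\partial D)$ for $t\in[\tfrac12,1)$, yielding $u^s(v)\in H^{-t+3/2}_{loc}(\mathbb{R}^3\setminus\overline D)\hookrightarrow L^2(\Omega\setminus\overline D)$, together with the matching weak conormal trace $\Vert\sigma(v)\cdot\nu\Vert_{H^{-t}(\partial D)}\le C\Vert v\Vert_{H^{-t+3/2}(D)}$ for solutions $v$. On Lipschitz domains this is a genuine input (layer-potential mapping properties in the spirit of \cite{Go-Ke,Ma-Mit}), not a consequence of the variational $H^1$ theory alone. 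Once that is in place, your treatment of \eqref{someTHH} via Cauchy--Schwarz and interior regularity near the smooth $\partial\Omega$ is fine.
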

\begin{proof}
 See (\cite{Kar-Sini2}, subsection 5.2.2).
\end{proof}
  Now we choose $\frac{1}{2}<t<1$, then by interpolation and using the Young inequality we obtain:
\begin{equation}\label{us--espilon}
 \Vert v\Vert^2_{H^{-t+\frac{3}{2}}(D)}\leq \epsilon \Vert \nabla v \Vert^2_{L^2(D)}+\frac{C}{\epsilon}\Vert v \Vert^2_{L^2(D)}  
\end{equation}
with some $C>0$ fixed and every $\epsilon>0$.
Therefore combining \eqref{Omega-v}, Lemma \ref{lem_im} and \eqref{us--espilon}, we deduce that
\begin{equation}\label{main-estimate}
 -I(v, v) \geq c\Vert \nabla v \Vert^2_{L^2(D)} -C\Vert v \Vert^2_{L^2(D)}, \; \tau >>1.
\end{equation}
In the case when $v\neq w,$ we use the form
\begin{equation}\label{fo}
 I(v, w)= -I(v,v) + I(v,U)
\end{equation}
 where $U = v+w.$ 
Combining the estimate
\[
 |I(v,U)| \leq C\left(\frac{1}{\epsilon}\|\nabla U\|_{L^2(D)}^{2} + \epsilon\|\nabla v\|_{L^2(D)}^{2}\right) \ \text{for} \ 0<\epsilon\ll1
\]
together with \eqref{main-estimate} and the form \eqref{fo}, we deduce that
\begin{equation}\label{sp_im}
 I(v,w) \geq (c-\epsilon)\|\nabla v\|_{L^2(D)}^{2} - C\|v\|_{L^2(D)}^{2} - \frac{c_1}{\epsilon}\|\nabla U\|_{L^2(D)}^{2}.
\end{equation}
\begin{lemma}\label{esti_p}
 Let $y\in\mathbb{R}\setminus\overline{D}$. We have the following estimates
\begin{enumerate}
 \item 
\begin{equation}\label{amarMUKti}
\|\Phi_{p}^{j}(\cdot,y)\|_{L^2(D)}^{2}\leq C\int_D\frac{1}{|x-y|^6}dx
\end{equation}
\item
\begin{equation}\label{TUMIi}
 \sum_{j=1}^{3}\|\nabla \Phi_{p}^{j}(\cdot,y)\|_{L^2(D)}^{2} \geq C\int_D\frac{1}{|x-y|^8}dx + \text{lower order terms}
\end{equation}
recalling that $\Phi_{p}^{j}$ is the $j$-th column of the $p$-part of the fundamental tensor of the elasticity for all $j=1, 2, 3$.
The estimates \eqref{amarMUKti} and \eqref{TUMIi} are valid for $\Phi_{s}^{j}, j=1 ,2 ,3$, the $j$-th column of the corresponding $s$-parts. 
\item
For all $j=1,2,3$
\[
 \|\nabla\Phi^j(\cdot,y)\|_{L^2(D)}^{2}\leq C\int_D\frac{1}{|x-y|^4}dx 
\]
where $\Phi^j$ is the $j$-th column of the fundamental tensor of the elasticity.
\end{enumerate}
\end{lemma}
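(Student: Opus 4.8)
The plan is to prove Lemma \ref{esti_p} by explicit computation with the $p$- and $s$-parts of the elastic fundamental tensor, exploiting the fact that $\Phi_p(x,y) = -\kappa^{-2}\nabla_x\nabla_x^{\top} G_p(x,y)$ and $\Phi_s(x,y) = \kappa^{-2}[\kappa_s^2 G_s(x,y) I + \nabla_x\nabla_x^{\top} G_s(x,y)]$, so that each column $\Phi_p^j$ or $\Phi_s^j$ is (up to the Helmholtz zeroth order term in the $s$-case, which is of lower order near $x=y$) a vector of second derivatives of $G_t(x,y) = e^{i\kappa_t r}/(4\pi r)$, where $r = |x-y|$. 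The leading singularity of $\partial^2_{x_a x_b} G_t$ as $r\to 0$ is homogeneous of degree $-3$ in $r$, coming from differentiating the $1/(4\pi r)$ factor twice; all the other contributions (those involving the exponential's Taylor expansion or fewer than two derivatives hitting $1/r$) are $O(r^{-2})$ or better, hence ``lower order terms'' in the sense of \eqref{parochar}.

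First I would record, from the Appendix computations referenced in the paper, the pointwise asymptotics: $|\Phi_p^j(x,y)| \le C r^{-3}$ and $|\Phi_s^j(x,y)| \le C r^{-3}$ (in fact the $s$-case has the zeroth order piece $\kappa_s^2 G_s \sim r^{-1}$, which is dominated), and similarly $|\nabla_x \Phi_p^j(x,y)| \le C r^{-4}$, $|\nabla_x \Phi_s^j(x,y)| \le C r^{-4}$, while $|\nabla_x \Phi^j(x,y)| \le C r^{-2}$ since the full fundamental tensor $\Phi$ itself is only $O(r^{-1})$ (it behaves like a Green's function, not its second derivative). The upper bounds \eqref{amarMUKti} and item (3) then follow immediately by squaring and integrating over $D$: $\|\Phi_p^j\|_{L^2(D)}^2 \le C\int_D r^{-6}dx$ and $\|\nabla\Phi^j\|_{L^2(D)}^2 \le C\int_D r^{-4}dx$. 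These integrals are finite for $y\notin\overline D$ and the same chain of inequalities applies verbatim to the $s$-parts.

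The substantive point is the lower bound \eqref{TUMIi}, and here the footnote in the body of the paper is the key: one must \emph{sum over $j=1,2,3$} before extracting the leading term, because individual columns can have cancellations but the sum cannot. Concretely, $\sum_j |\nabla_x \Phi_p^j(x,y)|^2 = \kappa^{-4}\sum_{j,a,b}|\partial_{x_a x_b x_j} G_p(x,y)|^2$, and the leading ($O(r^{-4})$) part of $\partial_{x_a x_b x_j}[1/(4\pi r)]$ is a degree $-4$ homogeneous rational function of the components of $x-y$; squaring and summing over all of $a,b,j$ produces a \emph{strictly positive} function bounded below by $c\,r^{-8}$ uniformly in the direction $(x-y)/r$ (this is the standard fact that the full Hessian-of-Hessian of the Newtonian potential is elliptic — no direction annihilates all third derivatives of $1/r$). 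Integrating this lower bound over $D$ gives $\sum_j\|\nabla\Phi_p^j\|_{L^2(D)}^2 \ge c\int_D r^{-8}dx$, and the cross terms between the leading part and the $O(r^{-3})$ corrections, together with the corrections squared, are all bounded by $C\int_D r^{-7}dx$ (and lower), which is exactly the ``lower order terms'' allowed in \eqref{TUMIi}. For $\Phi_s^j$ the extra $\kappa_s^2 G_s I$ term contributes $\nabla_x(\kappa_s^2 G_s)$, of size $O(r^{-2})$, again absorbed into lower order terms, so the same lower bound holds.

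The main obstacle I anticipate is precisely verifying the uniform ellipticity in the last step: one needs to check that $\sum_{j,a,b}|P_{jab}(\omega)|^2 \ge c > 0$ for all unit vectors $\omega$, where $P_{jab}(\omega)$ is the homogeneous degree $-4$ leading coefficient of $\partial_{x_j x_a x_b}(1/(4\pi r))$ evaluated at $x-y = r\omega$ (an explicit polynomial in $\omega$ of the form $(\text{deg-3 poly})/r^4$). This reduces to showing that the third-order derivative tensor of the Newtonian potential is nowhere degenerate on the sphere, which can either be done by a direct (if tedious) computation of the entries — most cleanly done in the Appendix the paper promises — or by a symmetry/representation argument using that these are spherical harmonics of fixed degree. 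Once that positivity constant is in hand, everything else is routine: the triangle inequality to peel off lower order terms, and elementary comparison of the resulting weighted integrals. The transfer of all estimates from $\Phi_p$ to $\Phi_s$ is immediate since both are built from second derivatives of a Helmholtz Green's function with the same leading local singularity $-\kappa^{-2}\nabla\nabla^{\top}(1/4\pi r)$.
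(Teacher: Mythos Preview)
Your proposal is correct and follows essentially the same route as the paper: both proofs extract the leading homogeneous singularity of the second/third derivatives of the Helmholtz Green's function, peel off lower-order remainders via an $\epsilon$-type inequality, and exploit the summation over $j=1,2,3$ to obtain strict positivity of the leading coefficient in the lower bound. The only cosmetic difference is that the paper carries out the positivity check in item (2) by an explicit algebraic computation (yielding the constant $18/|x-y|^{8}$), whereas you phrase it as uniform nondegeneracy of the third-derivative tensor of $1/r$ on the sphere; for item (3) the paper uses the Kupradze series expansion of $\Phi_{ij}$ rather than your more conceptual ``$\Phi\sim r^{-1}$ so $\nabla\Phi\sim r^{-2}$'' observation, but the content is the same.
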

\begin{proof}
 \begin{enumerate}
  \item 
From the explicit form of $\Phi_{p}^{j}$ we obtain
\begin{align}
 \|\Phi_{p}^{j}\|_{L^2(D)}^{2}
& = \frac{1}{(\kappa^2)^2}\left[\|\frac{\partial^2G_p}{\partial x_j\partial x_1}\|_{L^2(D)}^{2}+\|\frac{\partial^2G_p}{\partial x_j\partial x_2}\|_{L^2(D)}^{2}+\|\frac{\partial^2G_p}{\partial x_j\partial x_3}\|_{L^2(D)}^{2}\right] \nonumber\\
& \leq C \left[\int_D\frac{1}{|x-y|^2}dx+\int_D\frac{1}{|x-y|^4}dx+\int_D\frac{1}{|x-y|^6}dx\right].
\end{align}
\item
To estimate the gradient term, we need to write clearly the explicit form of the gradient term which is of the form
\[
 \nabla\Phi_{p}^{j}=-\frac{1}{\kappa^2}\left(\begin{array}{ccc}
                           \frac{\partial^3G_p}{\partial x_1\partial x_j\partial x_1} & \frac{\partial^3G_p}{\partial x_2\partial x_j\partial x_1} & \frac{\partial^3G_p}{\partial x_3\partial x_j\partial x_1} \\
                           \frac{\partial^3G_p}{\partial x_1\partial x_j\partial x_2} & \frac{\partial^3G_p}{\partial x_2\partial x_j\partial x_2} & \frac{\partial^3G_p}{\partial x_3\partial x_j\partial x_2} \\
                           \frac{\partial^3G_p}{\partial x_1\partial x_j\partial x_3} & \frac{\partial^3G_p}{\partial x_2\partial x_j\partial x_3} & \frac{\partial^3G_p}{\partial x_3\partial x_j\partial x_3} 
                          \end{array}
\right).
\]
Its norm can be written as
\[
 \|\nabla\Phi_{p}^{j}\|_{L^2(D)}^{2} = \frac{1}{\kappa^4}\sum_{l,m=1}^{3}\int_D|\frac{\partial^3G_p}{\partial x_l\partial x_j\partial x_m}|^2dx.
\]
The idea of the proof is as follows. First we have to look for the dominating term of the each entry of this matrix and second we use the $\epsilon$-inequality.
Let us consider the term $\frac{\partial^3G_p}{\partial x_{l}^{3}}$ for $l=1,2,3.$ The dominating term of $\frac{\partial^3G_p}{\partial x_{l}^{3}}$ is 
$\frac{1}{4\pi}e^{i\kappa_p|x-y|}[-\frac{3(x_l-y_l)}{|x-y|^5}+15\frac{(x_l-y_l)^3}{|x-y|^7}],$ for $l=1,2,3$ see \eqref{ApGreNde1} in the appendix. Now using $\epsilon$-inequality (precisely for any $a,b$ we know that
$(a-b)^2\geq (1-\epsilon)a^2+(1-\frac{1}{\epsilon})b^2$) we have for $l=1,2,3$
\[
 |\frac{\partial^3G_p}{\partial x_{l}^{3}}|^2\geq C \left[\frac{3(x_l-y_l)}{|x-y|^5}-15\frac{(x_l-y_l)^3}{|x-y|^7}\right]^2+ \text{lower order terms}
\]
Similarly for $l\neq m,$ with $l,m = 1,2,3$, compare \eqref{ApGreNde2}, we have
\begin{align*}
 |\frac{\partial^3G_p}{\partial x_l\partial x_{m}^{2}}|^2
&=|\frac{\partial^3G_p}{\partial x_{m}^{2}\partial x_l}|^2
=|\frac{\partial^3G_p}{\partial x_m\partial x_l\partial x_m}|^2 \\
&\geq C \left[\frac{3(x_l-y_l)}{|x-y|^5}-15\frac{(x_m-y_m)^2(x_l-y_l)}{|x-y|^7}\right]^2+ \text{lower order terms},
\end{align*}
and for $k\neq l\neq m$ with $k,l,m=1,2,3$, compare \eqref{ApGreNde3}, we have
\begin{align}\label{pappaNN}
  |\frac{\partial^3G_p}{\partial x_k\partial x_l\partial x_m}|^2\geq C \left[-15\frac{(x_k-y_k)(x_l-y_l)(x_m-y_m)}{|x-y|^7}\right]^2+ \text{lower order terms}.
\end{align}
Therefore,\footnote{We need to sum up all the terms as it can happen that $\|\nabla\Phi_{p}^{j}\|_{L^2(D)}$ has lower estimate then $\int_{D}|x-y|^{-8}dx$, see \eqref{pappaNN}.}
\begin{align*}
 \sum_{j=1}^{3}\sum_{l,m=1}^{3}|\frac{\partial^3G_p}{\partial x_l\partial x_j\partial x_m}|^2 
&\geq C[\frac{9\times 7}{|x-y|^8}-\frac{3\times 90}{|x-y|^{12}}\{\sum_{k=1}^{3}(x_k-y_k)^2\}^2+\frac{(15)^2}{|x-y|^{12}}\{\sum_{k=1}^{3}(x_k-y_k)^2\}^2] \\
& +\text{lower order terms} \\
& = \frac{18C}{|x-y|^8} + \text{lower order terms}.
\end{align*}
Hence,
\begin{align*}
\sum_{j=1}^{3}\|\nabla\Phi_{p}^{j}(\cdot,y)\|_{L^2(D)}^{2}
& = \frac{1}{\kappa^4}\int_D\sum_{j=1}^{3}\sum_{l,m=1}^{3}|\frac{\partial^3G_p}{\partial x_l\partial x_j\partial x_m}|^2dx \\
& \geq C\int_D\frac{1}{|x-y|^8}dx + \text{lower order terms}.
\end{align*}
\item
Note that $\Phi(x,y) = (\Phi_{ij}(x,y))_{i,j}$, where $\Phi_{ij}$ as in \eqref{ij_fund}. For $l\neq i,j$, the term \eqref{kkkij} can be written as
\begin{align*}
 \frac{\partial\Phi_{ij}}{\partial x_l}
= & -\frac{1}{8\pi}\delta_{ij}(\frac{1}{\mu_0}+\frac{1}{\lambda_0+2\mu_0})(x_l-y_l)|x-y|^{-3} \\
& + \frac{1}{4\pi}\sum_{n=1}^{\infty}\frac{i^n}{(n+2)n!}\left(\frac{n+1}{{\mu_0}^{\frac{n+2}{2}}}+ \frac{1}{(\lambda_0+2\mu_0)^{\frac{n+2}{2}}}\right)\kappa^n\delta_{ij}(n-1)(x_l-y_l)|x-y|^{n-3} \\
& - \frac{3}{8\pi}(\frac{1}{\mu_0}-\frac{1}{\lambda_0+2\mu_0})(x_l-y_l)(x_i-y_i)(x_j-y_j)|x-y|^{-5} \\
& -\frac{1}{4\pi}\sum_{n=1}^{\infty}\frac{i^n(n-1)}{(n+2) n!}\left(\frac{1}{{\mu_0}^{\frac{n+2}{2}}} - \frac{1}{(\lambda_0+2\mu_0)^{\frac{n+2}{2}}}\right)\kappa^n(n-3)(x_l-y_l)(x_i-y_i)(x_j-y_j)|x-y|^{n-5}\\
& := A + B + C + D.
\end{align*}
Remark that $A$ and $C$ are the higher order terms and $B$ and $D$ are the convergent series with the lower order terms. Therefore,
the upper bound of $\frac{\partial\Phi_{ij}}{\partial x_l}$ for $l\neq i,j$ can be viewed as 
\begin{align*}
 |\frac{\partial\Phi_{ij}}{\partial x_l}|^2 
 \leq c [A^2 + B^2 + C^2 + D^2] 
 \leq c\frac{1}{|x-y|^4}.
\end{align*}
Similarly, we have
\begin{align*}
& |\frac{\partial\Phi_{ij}}{\partial x_i}|^2 \leq c\frac{1}{|x-y|^4} \\
& |\frac{\partial\Phi_{ij}}{\partial x_j}|^2 \leq c\frac{1}{|x-y|^4}
\end{align*}
where $c$ to be constant.
Summing up we obtain
\begin{align*}
 \|\nabla\Phi^j\|_{L^2(D)}^{2}
& = \int_D\sum_{l=1}^{3}|\nabla\Phi_{lj}(x,y)|^2dx \\
& \leq c \int_D\frac{1}{|x-y|^4}dx.
\end{align*}
 \end{enumerate}
\end{proof}
\subsection{Proof of Theorem \ref{main_theo} for the $I_{pp}$ and $I_{ss}$ cases}
From Lemma \ref{esti_p} and \eqref{main-estimate} we obtain our required estimate.
 \begin{align*}
  -I_{pp}(y)
 =  -\sum_{j=1}^{3}I(\Phi_{p}^{j},\Phi_{p}^{j}) 
& \geq \sum_{j=1}^{3}\left[c\Vert \nabla \Phi_{p}^{j} \Vert^2_{L^2(D)} -C\Vert \Phi_{p}^{j} \Vert^2_{L^2(D)}\right] \\
& \geq C\int_D\frac{1}{|x-y|^8}dx + \text{lower order terms}
 \end{align*}
where $x\neq y, y\in\Omega\setminus\bar D.$
Similarly, we can proof Theorem \ref{main_theo} by using the $s$-part of the fundamental solution.
\subsection{Proof of Theorem \ref{main_theo} for the $I_{sp}$ and $I_{ps}$ cases} 
Combining Lemma \ref{esti_p} and the inequality \eqref{sp_im} we deduce that
 \begin{align*}
   I_{ps}(y)
 =\sum_{j=1}^{3}I(\Phi_{p}^{j},\Phi_{s}^{j}) 
& \geq \sum_{j=1}^{3}\left[(c-\epsilon)\|\nabla \Phi_{p}^{j}\|_{L^2(D)}^{2} - C\|\Phi_{p}^{j}\|_{L^2(D)}^{2} - \frac{c_1}{\epsilon}\|\nabla \Phi^j\|_{L^2(D)}^{2}\right] \\
& \geq c_0\int_D\frac{1}{|x-y|^8}dx + \text{lower order terms}
 \end{align*}
 where $x\neq y, y\in\Omega\setminus\bar D$ and choose $\epsilon>0$ such that $c-\epsilon>c_0>0.$ Similarly, we can prove the estimate for the $sp$ case.
\begin{section}{\textbf{Proof of Theorem \ref{main_theo} for the penetrable case}}
 We consider $v$ as an incident field and $u^s(v)$ the scattered field, therefore the total field $\tilde v = v + u^s(v)$ satisfies the following problem
\begin{equation}  \label{Lame_Penetrable}
\begin{cases}
\nabla\cdot(\sigma(\tilde v)) + \kappa^2 \tilde v = 0, \; \mbox{ in } \mathbb{R}^3  \\
u^s(v) \ \ \text{satisfies the Kupradze radiation condition},
\end{cases}
\end{equation}
recalling that
$
 \sigma(\tilde v) = \lambda(\nabla\cdot\tilde v)I_3 + \mu(\nabla\tilde v + (\nabla\tilde v)^\top).
$
The incident field satisfies 
\begin{equation}\label{cgo_p}
 \nabla\cdot(\sigma_0(v)) + \kappa^2 v = 0 \ \ \text{in} \ \ \Omega,
\end{equation}
where 
$
 \sigma_0(v) = \lambda_0(\nabla\cdot v)I_3 + 2\mu_0\epsilon(v)
$
Accordingly, we will use $\sigma_D(v)$ to denote $\sigma(v)- \sigma_0(v),$ i.e.
$
 \sigma_D(v) = \lambda_D(\nabla\cdot v)I_3 + 2\mu_D\epsilon(v).
$
Note that for a matrix $A=(a_{ij})$, we use $\vert A\vert$ to denote $(\sum_{i,j}|a_{ij}|^{2})^{\frac{1}{2}}$. 
For any matrices $A=(a_{ij})$ and $B=(b_{ij})$, we define the product as 
$ A\cdot B := \sum_{i,j=1}^{3}a_{ij}b_{ij}$. 
As in the impenetrable case, we set
\[
 I(v, w):=\int_{\partial D}[u^s(v)\cdot (\overline{\sigma(w)}\cdot \nu)-\overline{w}\cdot(\sigma(u^s(v))\cdot \nu)]ds(x).
\]
\begin{lemma}\label{funss_1}
 We have the following estimates 
\[
 I(v,w) = - \int_{\Omega}\sigma_D(w)\cdot (\nabla\overline{v})^T dx - \int_{\Omega}\sigma_D(u^s(w))\cdot (\nabla\overline{v})^T dx
\]
\begin{equation}\label{pen_pp}
 \begin{split}
 -I(v,v) \geq
& \int_D \frac{4\mu_0\mu_D}{3\mu} \vert \epsilon(v)\vert^2 dx - \int_D \frac{4\mu_0\mu_D}{9\mu} \vert (\nabla\cdot v)I_3\vert^2 dx - k^2\int_{\Omega} \vert u^s(v)\vert^2dx \\
& - \int_{\partial\Omega} (\sigma(u^s(v))\cdot \nu)\cdot\overline{u^s(v)}ds(x). 
\end{split}
\end{equation}
\end{lemma}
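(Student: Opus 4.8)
The plan is to convert the boundary integral $I(v,w)$ over $\partial D$ into a volume integral over $D$ by means of Betti's (second Green's) identity together with the transmission conditions satisfied by the total fields, and then, in the case $v=w$, to produce the coercive lower bound by completing squares against the positive definite total elasticity tensor. Since $\lambda_D=\mu_D=0$ outside $D$, the right-hand side asserted for $I(v,w)$ equals $-\int_D\sigma_D(\tilde w)\cdot(\nabla\overline v)^T dx$ with $\tilde w=w+u^s(w)$, so it is enough to prove $I(v,w)=-\int_D\sigma_D(\tilde v)\cdot(\nabla\overline w)^T dx$ — the two forms being interchanged by the reciprocity of the transmission problem — and then to split $\tilde v=v+u^s(v)$. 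Starting from the definition of $I(v,w)$ with the traces on $\partial D$ taken from the exterior, where $\sigma=\sigma_0$: continuity of $\tilde v$ and of the full traction $\sigma(\tilde v)\cdot\nu$ across $\partial D$, together with the smoothness of $v$ and $w$ there, shows that the background traction $\sigma_0(u^s(v))\cdot\nu$ jumps across $\partial D$ exactly by $\sigma_D(\tilde v)\cdot\nu$, so rewriting $I(v,w)$ with interior traces introduces an extra term $-\int_{\partial D}\overline w\cdot(\sigma_D(\tilde v)\cdot\nu)ds$. For the remaining surface integral I would invoke Betti's identity in $D$, using that there $\nabla\cdot\sigma_0(u^s(v))+\kappa^2u^s(v)=-\nabla\cdot\sigma_D(\tilde v)$ (subtract \eqref{cgo_p} for $v$ from \eqref{Lame_Penetrable} for $\tilde v$) while $w$ solves the homogeneous background system; one further integration by parts cancels the boundary term and leaves exactly $-\int_D\sigma_D(\tilde v)\cdot(\nabla\overline w)^T dx$.

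For the second estimate I take $v=w$, so that $-I(v,v)=\int_D\sigma_D(\tilde v)\cdot(\nabla\overline v)^T dx$, and substitute $\nabla\overline v=\nabla\overline{\tilde v}-\nabla\overline{u^s(v)}$. The cross term $\int_D\sigma_D(\tilde v)\cdot(\nabla\overline{u^s(v)})^T dx$ I would compute by testing the weak formulations of \eqref{Lame_Penetrable} and of \eqref{cgo_p} against $u^s(v)$ and subtracting; since $\sigma(\tilde v)\cdot\nu$ is continuous across $\partial D$ and $\sigma_D\equiv0$ near $\partial\Omega$, the only surface contribution survives on $\partial\Omega$, which gives
\[
\int_D\sigma_D(\tilde v)\cdot(\nabla\overline{u^s(v)})^T dx=\int_{\partial\Omega}(\sigma(u^s(v))\cdot\nu)\cdot\overline{u^s(v)}ds+\kappa^2\int_\Omega|u^s(v)|^2dx-\int_\Omega\sigma_0(u^s(v))\cdot(\nabla\overline{u^s(v)})^T dx .
\]
Combining this with the expression for $-I(v,v)$ yields
\[
-I(v,v)=\int_D\sigma_D(\tilde v)\cdot(\nabla\overline{\tilde v})^T dx+\int_\Omega\sigma_0(u^s(v))\cdot(\nabla\overline{u^s(v)})^T dx-\kappa^2\int_\Omega|u^s(v)|^2dx-\int_{\partial\Omega}(\sigma(u^s(v))\cdot\nu)\cdot\overline{u^s(v)}ds .
\]

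It then remains to bound the first two volume terms from below by the asserted coercive expression. Working pointwise in $D$, and writing $e=\epsilon(\tilde v)$, $f=\epsilon(v)$ (so $\epsilon(u^s(v))=e-f$) and $C_0,\ C=C_0+C_D$ for the isotropic elasticity tensors of the parameters $(\lambda_0,\mu_0)$ and $(\lambda,\mu)$ (so that $\sigma_0(v)=C_0\epsilon(v)$ and $\sigma(v)=C\epsilon(v)$), one has the algebraic identity $C_De\cdot\overline e+C_0(e-f)\cdot\overline{(e-f)}=C\bigl(e-C^{-1}C_0f\bigr)\cdot\overline{\bigl(e-C^{-1}C_0f\bigr)}+f\cdot\overline{C_0C^{-1}C_Df}$; since $C$ is positive definite ($\mu>0$, $2\mu+3\lambda>0$) the first summand on the right is $\ge0$, so the left-hand side is $\ge f\cdot\overline{C_0C^{-1}C_Df}$. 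Because all the tensors are isotropic, $C_0C^{-1}C_D$ is diagonal in the deviatoric/spherical decomposition, with eigenvalues $2\mu_0\mu_D/\mu$ and $(3\lambda_0+2\mu_0)(3\lambda_D+2\mu_D)/(3\lambda+2\mu)$, the latter being nonnegative by the hypothesis $2\mu_D+3\lambda_D\ge0$; using this together with elementary inequalities relating $|\epsilon(v)|^2$, its trace-free part and $|\nabla\cdot v|^2$ (and the sign of $\mu_D$) leads to $f\cdot\overline{C_0C^{-1}C_Df}\ge\frac{4\mu_0\mu_D}{3\mu}|\epsilon(v)|^2-\frac{4\mu_0\mu_D}{9\mu}|(\nabla\cdot v)I_3|^2$. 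Adding the nonnegative remainder $\int_{\Omega\setminus\overline D}\sigma_0(u^s(v))\cdot(\nabla\overline{u^s(v)})^T dx\ge0$ and inserting into the last displayed equality gives \eqref{pen_pp}. I expect the main obstacles to be, first, the careful handling of the traces across the merely Lipschitz interface $\partial D$ in the reduction step, and, above all, the pointwise algebraic inequality of the last paragraph — the completion of squares against the full tensor followed by the isotropic-tensor estimate for the possibly indefinite contrast $\sigma_D$, which is precisely what produces the stated constants; the case $\mu_D<0$ would be treated in the same way with the roles of $C_0$ and $C$, equivalently of $I(v,v)$ and $-I(v,v)$, interchanged.
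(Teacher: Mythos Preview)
The paper does not give its own proof of this lemma; it simply refers to \cite{Kar-Sini2} and records that the sign hypotheses $\mu_D>0$, $2\mu_D+3\lambda_D\ge0$ are used. Your outline --- rewrite $I(v,w)$ as a volume integral via the weak formulation of the transmission problem, then for $v=w$ split $v=\tilde v-u^s(v)$, obtain the energy identity on $\Omega$, and complete the square against the full tensor $C=C_0+C_D$ --- is exactly the standard route and is correct for the inequality~\eqref{pen_pp}; the isotropic diagonalisation in the deviatoric/spherical decomposition that you sketch produces a lower bound at least as strong as the stated one.

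One point deserves care. Your Betti/weak-formulation argument actually yields
\[
I(v,w)=-\int_D\sigma_D(\tilde v)\cdot(\nabla\overline w)^T\,dx,
\]
whereas the lemma, as stated, has $\tilde w$ and $\overline v$. You bridge this by invoking ``reciprocity of the transmission problem'', but that step is not obvious: the same computation with the roles of $v$ and $w$ exchanged gives $I(w,v)=-\int_D\sigma_D(\tilde w)\cdot(\nabla\overline v)^T\,dx$, and $I(v,w)=I(w,v)$ is not immediate from the definition (the complex conjugate sits on the second argument). This does not affect the inequality, since there $v=w$, and for the later mixed estimates the paper does not use the first identity in this asymmetric form anyway; but you should either drop the reciprocity remark and state the identity in the form you actually prove, or supply a genuine symmetry argument.
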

\begin{proof}
 See Lemma 5.2 of \cite{Kar-Sini2}. Note that to prove this lemma we need 
the condition on $\mu_D$ and $\lambda_D$ such that $\mu_D>0$ and $2\mu_D+3\lambda_D\geq 0$, stated in the introduction.
\end{proof}
\begin{lemma}\label{4.3lem}
 Let $v$ be any incident wave and $u^s(v)$ be the scattered wave, i.e. $v+u^s(v)$ is solution of \eqref{Lame_Penetrable}.
 \begin{enumerate}
  \item 
 We have 
\begin{equation}\label{4.3lem0}
 |\int_{\partial\Omega}(\sigma(u^s(v))\cdot\nu)\cdot \overline{u^s(v)}ds(x)| \leq C\mathcal{F}\Vert\nabla v\Vert_{L^2(D)}^{2}
\end{equation}
where $\mathcal{F}$ is defined by 
\[
 \mathcal{F} := \int_{B\setminus\overline{\Omega}}\Vert(\nabla\Phi(x,\cdot))^\top\Vert_{L^2(D)}^{2}dx + \int_{B\setminus\overline{\Omega}}\Vert\nabla(\nabla\Phi(x,\cdot))^\top\Vert_{L^2(D)}^{2}dx,
\]
with $B$ as any smooth domain containing $\overline{\Omega}$.
\item
($L^2-L^q$)-estimate: 
There exists $1\leq q_0 < 2$ such that for $q_0 < q \leq 2,$
\[
 \Vert u^s(v)\Vert_{L^2(\Omega)} \leq C \Vert \nabla v\Vert_{L^q(D)}
\]
with a positive constant $C$.
\end{enumerate}
\end{lemma}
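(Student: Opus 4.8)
\emph{Proof plan.} The idea is to derive both estimates from the single observation that $u^s(v)$ is the Kupradze--radiating solution of the \emph{constant coefficient} Lam\'e system whose source is supported in $\overline D$. Setting $\tilde v=v+u^s(v)$ and subtracting the incident--field equation \eqref{cgo_p} from \eqref{Lame_Penetrable} one gets $\nabla\cdot\sigma(u^s(v))+\kappa^2u^s(v)=-\nabla\cdot\sigma_D(v)$, equivalently $\nabla\cdot\sigma_0(u^s(v))+\kappa^2u^s(v)=-\nabla\cdot\sigma_D(\tilde v)$, and $\sigma_D(\tilde v)=\lambda_D(\nabla\cdot\tilde v)I_3+2\mu_D\epsilon(\tilde v)$ vanishes outside $\overline D$. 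Convolving the second form with the elastic fundamental tensor $\Phi$ of $\nabla\cdot\sigma_0(\cdot)+\kappa^2$ and integrating by parts (the jump of $\sigma_D(\tilde v)$ across $\partial D$ cancels the interior boundary term, $\sigma_D(\tilde v)$ being $0$ in $\mathbb{R}^3\setminus\overline D$) gives
\begin{equation*}
 u^s(v)(x)=\int_D\big(\nabla_y\Phi(x,y)\big)^{\top}:\sigma_D(\tilde v)(y)\,dy ,\qquad x\in\mathbb{R}^3 .
\end{equation*}
On the other hand, the $H^1$--well--posedness of the transmission problem (this is where $\mu_D>0$ and $2\mu_D+3\lambda_D\ge0$ enter, see \cite{Kar-Sini2}), applied to the first form, gives $\|\nabla u^s(v)\|_{L^2(D)}\le C\|\nabla v\|_{L^2(D)}$ and hence $\|\sigma_D(\tilde v)\|_{L^2(D)}\le C\|\nabla v\|_{L^2(D)}$; this removes the apparent circularity in the representation above.

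\emph{Part (1).} First, every $x\in B\setminus\overline\Omega$ stays at a fixed positive distance from $\overline D\subset\Omega$, so $y\mapsto\Phi(x,y)$ is smooth on $\overline D$, differentiation in $x$ passes under the integral, and the Cauchy--Schwarz inequality in $y$ gives $|u^s(v)(x)|\le\|(\nabla\Phi(x,\cdot))^{\top}\|_{L^2(D)}\|\sigma_D(\tilde v)\|_{L^2(D)}$ together with $|\nabla u^s(v)(x)|\le\|\nabla(\nabla\Phi(x,\cdot))^{\top}\|_{L^2(D)}\|\sigma_D(\tilde v)\|_{L^2(D)}$. Squaring, integrating over $B\setminus\overline\Omega$, and inserting the bound on $\|\sigma_D(\tilde v)\|_{L^2(D)}$ yields $\|u^s(v)\|_{H^1(B\setminus\overline\Omega)}^2\le C\,\mathcal F\,\|\nabla v\|_{L^2(D)}^2$. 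It then remains to absorb the boundary integral into this norm: on $B\setminus\overline\Omega$ the coefficients are constant, so there $\sigma(u^s(v))=\sigma_0(u^s(v))$ and $\nabla\cdot\sigma_0(u^s(v))=-\kappa^2u^s(v)\in L^2$, whence the conormal trace $\sigma(u^s(v))\cdot\nu$ is well defined on $\partial\Omega$; applying Green's first identity for the Lam\'e operator on $B\setminus\overline\Omega$ to the pair $(u^s(v),\chi u^s(v))$, with $\chi\in C_c^\infty(B)$, $\chi\equiv1$ near $\partial\Omega$, gives
\begin{equation*}
 \Big|\int_{\partial\Omega}\big(\sigma(u^s(v))\cdot\nu\big)\cdot\overline{u^s(v)}\,ds\Big|=\Big|\int_{B\setminus\overline\Omega}\big(\sigma_0(u^s(v)):\nabla(\chi\overline{u^s(v)})-\kappa^2\chi\,|u^s(v)|^2\big)\,dx\Big|\le C\,\|u^s(v)\|_{H^1(B\setminus\overline\Omega)}^2 ,
\end{equation*}
which together with the previous bound is \eqref{4.3lem0}.

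\emph{Part (2).} Here I would use that, from the explicit form of $\Phi$, $|\nabla_y\Phi(x,y)|\le C|x-y|^{-2}$ near the diagonal (cf.\ Lemma \ref{esti_p}), the remaining terms of the expansion being less singular, so the operator $g\mapsto\int_D(\nabla_y\Phi(\cdot,y))^{\top}:g(y)\,dy$ is dominated by the Riesz potential of order $1$ on $\mathbb{R}^3$. By the Hardy--Littlewood--Sobolev inequality it maps $L^q(D)$ into $L^s(\Omega)$ with $\tfrac1s=\tfrac1q-\tfrac13$; taking $q$ strictly below $2$ forces $s>2$, and since $\Omega$ is bounded, $\|u^s(v)\|_{L^2(\Omega)}\le C\|u^s(v)\|_{L^s(\Omega)}\le C\|\sigma_D(\tilde v)\|_{L^q(D)}\le C\|\nabla\tilde v\|_{L^q(D)}$. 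Finally, the $W^{1,q}$--well--posedness of the forward transmission problem---the higher--integrability (Meyers/Gehring) self--improvement of the $H^1$ estimate, valid for $q$ in some interval $(q_1,2]$ with $q_1<2$ depending only on the ellipticity constants, see \cite{Kar-Sini2}---gives $\|\nabla\tilde v\|_{L^q(D)}\le C\|\nabla v\|_{L^q(D)}$, so the assertion holds with $q_0:=\max\{q_1,6/5\}<2$.

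The genuinely non--routine ingredients are the two a priori estimates for the penetrable transmission problem, and especially the $W^{1,q}$ bound for some $q<2$ under the sole hypotheses that the Lam\'e coefficients are measurable and bounded and $\partial D$ is Lipschitz; this higher--integrability input (borrowed from \cite{Kar-Sini2}) is the step I expect to be the main obstacle. Everything else---the volume potential representation, the Cauchy--Schwarz and Hardy--Littlewood--Sobolev estimates, and the Green--identity bound on the conormal term---is standard.
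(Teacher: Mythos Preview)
Your sketch is correct and, since the paper itself gives no argument here but only cites \cite{Kar-Sini2}, you have in effect supplied the missing proof. The specific form of $\mathcal F$ in the statement makes clear that the intended argument for Part~(1) is precisely yours: the volume representation $u^s(v)(x)=\int_D(\nabla_y\Phi(x,y))^{\top}:\sigma_D(\tilde v)(y)\,dy$, Cauchy--Schwarz in $y$, integration in $x$ over $B\setminus\overline\Omega$ (producing exactly the two summands of $\mathcal F$), and then a Green identity with a cutoff to dominate the conormal term by $\|u^s(v)\|_{H^1(B\setminus\overline\Omega)}^2$. For Part~(2) the Riesz--potential/HLS bound on the same representation, together with the $W^{1,q}$ stability of the transmission problem for $q$ in a neighbourhood of $2$, is the natural route and matches what is invoked from \cite{Kar-Sini2}; you rightly single out this $W^{1,q}$ a priori estimate as the only substantive external input.

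One small slip in the write--up: the sentence ``taking $q$ strictly below $2$ forces $s>2$'' is backwards. With $\tfrac1s=\tfrac1q-\tfrac13$ one has $s\ge2$ iff $q\ge\tfrac65$, independently of whether $q<2$; your final choice $q_0=\max\{q_1,6/5\}$ already encodes the correct constraint, so the conclusion is unaffected.
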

\begin{proof}
 See Lemma 5.3 and Lemma 5.4 of \cite{Kar-Sini2}.
\end{proof}
We first recall Korn's inequality 
\[
 c\|\nabla u\|_{L^2(D)}^{2} \leq \|\epsilon(u)\|_{L^2(D)}^{2} + \|u\|_{L^2(D)}^{2},
\]
for all column vector $u$, where $c>0$ is a constant.
Note that for $j= 1,2,3$
\[
 \nabla\cdot\Phi_{p}^{j} = -\frac{1}{\kappa^2}\frac{\partial}{\partial x_1}(\Delta G_p) = \frac{\kappa_{p}^{2}}{\kappa^2}\frac{\partial G_p}{\partial x_1}.
\]
So, $\nabla\cdot\Phi_{p}^{j}$ behaves as a lower order term. Also, $\nabla\cdot\Phi_{s}^{j} = 0$ for all $j=1,2,3$. 
Hence, applying Korn's inequality, $L^2-L^q$-estimate, Lemma \ref{4.3lem} and the estimate \eqref{pen_pp}
we obtain, for $j=1,2,3$
\begin{align}
 -I(\Phi_{p}^{j},\Phi_{p}^{j}) 
 \geq (c_1-c_5\mathcal{F})\|\nabla\Phi_{p}^{j}\|_{L^2(D)}^{2} - c_2\|\Phi_{p}^{j}\|_{L^2(D)}^{2} - c_3\int_D|\frac{\partial G_p}{\partial x_j}|^2dx
- c_4\|\nabla\Phi_{p}^{j}\|_{L^q(D)}^{2}, \label{ppbhramor}
\end{align}
for $q<2$ and 
\begin{align}
 -I(\Phi_{s}^{j},\Phi_{s}^{j}) 
 \geq (c_1-c_4\mathcal{F})\|\nabla\Phi_{s}^{j}\|_{L^2(D)}^{2} - c_2\|\Phi_{s}^{j}\|_{L^2(D)}^{2}- c_3\|\nabla\Phi_{s}^{j}\|_{L^q(D)}^{2}, \label{ssGopono}
\end{align}
for $q<2$.
For the mixed case, we can write, for $j=1,2,3$
\begin{equation}\label{modhuro}
 \begin{split}
 & I(\Phi_{s}^{j},\Phi_{p}^{j}) = - I(\Phi_{s}^{j},\Phi_{s}^{j}) + I(\Phi_{s}^{j},\Phi^j), \\ 
 & I(\Phi_{p}^{j},\Phi_{s}^{j}) = - I(\Phi_{s}^{j},\Phi_{s}^{j}) + I(\Phi^j,\Phi_{s}^{j}),
\end{split}
\end{equation}
where the $j$-th column of the elastic tensor $\Phi^j = \Phi_{p}^{j}+\Phi_{s}^{j}.$ Using the $\epsilon$-inequality, we have
\begin{equation}\label{janatam}
 |I(\Phi_{s}^{j},\Phi^j)| \leq C\left(\frac{1}{\epsilon}\|\nabla \Phi^j\|_{L^2(D)}^{2} + \epsilon\|\nabla \Phi_{s}^{j}\|_{L^2(D)}^{2} \right).
\end{equation}
Combining \eqref{modhuro} and \eqref{janatam}, we obtain for $q_0<q<2,$
\begin{equation}\label{sp_linear}
 \begin{split}
&  I(\Phi_{s}^{j},\Phi_{p}^{j}), I(\Phi_{p}^{j},\Phi_{s}^{j}) \\
& \geq (C - \mathcal{F} - {\tilde C}{\epsilon})\|\nabla\Phi_{s}^{j}\|_{L^2(D)}^{2} + (C - \mathcal{F} - c_2)\|\Phi_{s}^{j}\|_{L^2(D)}^{2} - c_1\Vert\nabla \Phi_{s}^{j}\Vert_{L^q(D)}^{2} - \frac{\tilde{C}}{\epsilon} \|\nabla \Phi^j\|_{L^2(D)}^{2}.
 \end{split}
\end{equation}
\subsection{Proof of Theorem \ref{main_theo} for the $I_{pp}$ and $I_{ss}$ cases}
Recall that $\Phi^j, \Phi_{p}^{j}$ and $\Phi_{s}^{j}$ are the $j$-th column of the fundamental solution, its $p$-part and $s$-part respectively.
Applying the Minkowski inequality we obtain
\begin{equation}\label{ss_impe}
\begin{split}
 \|\nabla \Phi_{p}^{j}\|_{L^p(D)}^{2}
&\leq C\left[\int_D\frac{1}{|x-y|^{4p}}dx\right]^{\frac{2}{p}} + \text{lower order terms} \\
& \approx \left[d(y,D)\right]^{(3-4p)\frac{2}{p}}
\end{split}
\end{equation}
i,e the term $\|\nabla \Phi_{p}^{j}\|_{L^p(D)}$ has a lower order behavior than the term $\|\nabla \Phi_{p}^{j}\|_{L^2(D)}$ as $p<2.$
Hence from \eqref{ppbhramor}, we have
\[
 \begin{split}
   - I_{pp}(y)
 = -\sum_{j=1}^{3}I(\Phi_{p}^{j},\Phi_{p}^{j}) 
 \geq C\int_D\frac{1}{|x-y|^8}dx + \text{lower order terms}.
 \end{split}
\]
Similarly, using the $s$-part of the fundamental solution of the elasticity we can prove Theorem \ref{main_theo} for $I_{ss}$.
\subsection{Proof of Theorem \ref{main_theo} for the $I_{sp}$ and $I_{ps}$ cases}
From \eqref{sp_linear}, we obtain
 \begin{align*}
    I_{ps}(y)
 = \sum_{j=1}^{3}I(\Phi_{p}^{j},\Phi_{s}^{j}) 
 \geq (C - \mathcal{F} - {\tilde C}{\epsilon})\int_D\frac{1}{|x-y|^8}dx + \text{lower order terms}.
 \end{align*}
Now, Theorem \ref{main_theo} follows by appropriately choosing $\epsilon>0$ and the smooth domain $B$ such that $|B\setminus\overline{\Omega}|$ is small enough so that $(C - \mathcal{F} - {\tilde C}{\epsilon})>c_0>0$. 
Similarly, we can derive the estimate of $I_{sp}(y)$.
\end{section}
\section{Appendix}
\subsection{Derivatives of the Helmholtz fundamental solution}
We have, for $x,y\in \mathbb{R}^3$ with $x\neq y$ 
\[
 G_p(x,y) = \frac{e^{i\kappa_p|x-y|}}{4\pi|x-y|}.
\]
\textbf{1st order partial derivatives}\\
The first partial derivatives of $G_p$ can be written as:
\[
 \frac{\partial G_p(x,y)}{\partial x_l} = \frac{1}{4\pi}e^{i\kappa_p|x-y|}\left[\frac{i\kappa_p(x_l-y_l)}{|x-y|^2}-\frac{(x_l-y_l)}{|x-y|^3}\right],
\]
for all $l=1,2,3.$ \\
\textbf{2nd order partial derivatives}\\
For all $l=1,2,3$
\[
  \frac{\partial^2G_p}{\partial x_{l}^{2}}
= \frac{1}{4\pi}e^{i\kappa_p|x-y|}\left[(i\kappa_p)^2\frac{(x_l-y_l)^2}{|x-y|^3}-3i\kappa_p\frac{(x_l-y_l)^2}{|x-y|^4}+\frac{i\kappa_p}{|x-y|^2}-\frac{1}{|x-y|^3}+3\frac{(x_l-y_l)^2}{|x-y|^5}\right]. 
\]
For $l\neq m$ with $l,m=1,2,3$, we have
\[
 \frac{\partial^2 G_p}{\partial x_m\partial x_l}
= \frac{1}{4\pi}e^{i\kappa_p|x-y|}(x_l-y_l)(x_m-y_m)\left[(i\kappa_p)^2\frac{1}{|x-y|^3}-3i\kappa_p\frac{1}{|x-y|^4}+3\frac{1}{|x-y|^5}\right].
\]
\textbf{3rd order partial derivatives}\\
For all $l=1,2,3$
\begin{align}
 \frac{\partial^3G_p}{\partial x_{l}^{3}} 
=& \frac{1}{4\pi}e^{i\kappa_p|x-y|}(x_l-y_l)[3(i\kappa_p)^2\frac{1}{|x-y|^3}-9(i\kappa_p)\frac{1}{|x-y|^4}+(i\kappa_p)^3\frac{(x_l-y_l)^2}{|x-y|^4}+3\frac{1}{|x-y|^5} \nonumber \\
& -6(i\kappa_p)^2\frac{(x_l-y_l)^2}{|x-y|^5}+15i\kappa_p\frac{(x_l-y_l)^2}{|x-y|^6}-15\frac{(x_l-y_l)^2}{|x-y|^7}]. \label{ApGreNde1}
\end{align}
For $l\neq m$ with $l,m=1,2,3$ we have
\begin{align}
 \frac{\partial^3G_p}{\partial x_l\partial x_{m}^{2}}
=&\frac{\partial^3G_p}{\partial x_m\partial x_l\partial x_m}=\frac{\partial^3G_p}{\partial x_m\partial x_m\partial x_l} \nonumber \\
=& \frac{1}{4\pi}e^{i\kappa_p|x-y|}(x_l-y_l)[(i\kappa_p)^2\frac{1}{|x-y|^3}-3i\kappa_p\frac{1}{|x-y|^4}+(i\kappa_p)^3\frac{(x_m-y_m)^2}{|x-y|^4}+3\frac{1}{|x-y|^5} \nonumber \\
&-6(i\kappa_p)^2\frac{(x_m-y_m)^2}{|x-y|^5}
+15i\kappa_p\frac{(x_m-y_m)^2}{|x-y|^6}-15\frac{(x_m-y_m)^2}{|x-y|^7}]. \label{ApGreNde2}
\end{align}
At last for $k\neq l\neq m$ with $k,l,m=1,2,3$ we have
\begin{align}
 \frac{\partial^3G_p}{\partial x_k\partial x_l\partial x_m}
=& \frac{1}{4\pi}e^{i\kappa_p|x-y|}(x_k-y_k)(x_l-y_l)(x_m-y_m) \nonumber \\
&\left[(i\kappa_p)^3\frac{1}{|x-y|^4}-6(i\kappa_p)^2\frac{1}{|x-y|^5}-9i\kappa_p\frac{1}{|x-y|^6}-15\frac{1}{|x-y|^7}\right]. \label{ApGreNde3}
\end{align}
\subsection{Derivatives of the elastic fundamental tensor}
In \eqref{funda_elas1}, the fundamental tensor of the elastic model is given. Now the $ij$-th element of the fundamental tensor $\Phi(x,y)$ can be viewed as:
\begin{equation}\label{ij_fund}
\begin{split}
 \Phi_{ij}(x,y) 
= & \frac{1}{4\pi}\sum_{n=0}^{\infty}\frac{i^n}{(n+2) n!}\left(\frac{n+1}{{\mu_0}^{\frac{n+2}{2}}}+ \frac{1}{(\lambda_0+2\mu_0)^{\frac{n+2}{2}}}\right)\kappa^n\delta_{ij}|x-y|^{n-1} \\
& - \frac{1}{4\pi}\sum_{n=0}^{\infty}\frac{i^n(n-1)}{(n+2) n!}\left(\frac{1}{{\mu_0}^{\frac{n+2}{2}}} - \frac{1}{(\lambda_0+2\mu_0)^{\frac{n+2}{2}}}\right)\kappa^n|x-y|^{n-3}(x_i-y_i)(x_j-y_j), 
\end{split}
\end{equation}
where $x,y \in \mathbb{R}^3$ with $x\neq y,$ see for more details [\cite{Kup2}, Chap. 2].\\
For $l\neq i,j$
\begin{align}
 \frac{\partial\Phi_{ij}}{\partial x_l}
= & \frac{1}{4\pi}\sum_{n=0}^{\infty}\frac{i^n}{(n+2)n!}\left(\frac{n+1}{{\mu_0}^{\frac{n+2}{2}}}+ \frac{1}{(\lambda_0+2\mu_0)^{\frac{n+2}{2}}}\right)\kappa^n\delta_{ij}(n-1)(x_l-y_l)|x-y|^{n-3} \nonumber \\
& -\frac{1}{4\pi}\sum_{n=0}^{\infty}\frac{i^n(n-1)}{(n+2) n!}\left(\frac{1}{{\mu_0}^{\frac{n+2}{2}}} - \frac{1}{(\lambda_0+2\mu_0)^{\frac{n+2}{2}}}\right)\kappa^n(n-3)(x_l-y_l)(x_i-y_i)(x_j-y_j)|x-y|^{n-5}. \label{kkkij}
\end{align}
Similarly, for $l=i$
\begin{align}
 \frac{\partial\Phi_{ij}}{\partial x_i}
= & \frac{1}{4\pi}\sum_{n=0}^{\infty}\frac{i^n}{(n+2)n!}\left(\frac{n+1}{{\mu_0}^{\frac{n+2}{2}}}+ \frac{1}{(\lambda_0+2\mu_0)^{\frac{n+2}{2}}}\right)\kappa^n\delta_{ij}(n-1)(x_l-y_l)|x-y|^{n-3} \nonumber \\
& -\frac{1}{4\pi}\sum_{n=0}^{\infty}\frac{i^n(n-1)}{(n+2) n!}\left(\frac{1}{{\mu_0}^{\frac{n+2}{2}}} - \frac{1}{(\lambda_0+2\mu_0)^{\frac{n+2}{2}}}\right)\kappa^n[(n-3)(x_l-y_l)(x_i-y_i)(x_j-y_j)|x-y|^{n-5} \nonumber \\
&+(x_j-y_j)|x-y|^{n-3}]. \label{l0i}
\end{align}
and for $l=j$ we obtain
\begin{align}
 \frac{\partial\Phi_{ij}}{\partial x_j}
 =&\frac{1}{4\pi}\sum_{n=0}^{\infty}\frac{i^n}{(n+2)n!}\left(\frac{n+1}{{\mu_0}^{\frac{n+2}{2}}}+ \frac{1}{(\lambda_0+2\mu_0)^{\frac{n+2}{2}}}\right)\kappa^n\delta_{ij}(n-1)(x_l-y_l)|x-y|^{n-3} \nonumber \\
& -\frac{1}{4\pi}\sum_{n=0}^{\infty}\frac{i^n(n-1)}{(n+2) n!}\left(\frac{1}{{\mu_0}^{\frac{n+2}{2}}} - \frac{1}{(\lambda_0+2\mu_0)^{\frac{n+2}{2}}}\right)\kappa^n[(n-3)(x_l-y_l)(x_i-y_i)(x_j-y_j)|x-y|^{n-5} \nonumber \\
&+(x_i-y_i)|x-y|^{n-3}]. \label{l0j} 
\end{align}

\begin{thebibliography}{10}

\bibitem{A-K}
C.~J.~S. Alves and R.~Kress.
\newblock On the far-field operator in elastic obstacle scattering.
\newblock {\em IMA J. Appl. Math.}, 67(1):1--21, 2002.

\bibitem{A}
T.~Arens.
\newblock Linear sampling methods for 2{D} inverse elastic wave scattering.
\newblock {\em Inverse Problems}, 17(5):1445--1464, 2001.

\bibitem{ChA}
A.~Charalambopoulos.
\newblock On the interior transmission problem in nondissipative,
  inhomogeneous, anisotropic elasticity.
\newblock {\em J. Elasticity}, 67(2):149--170 (2003), 2002.

\bibitem{C-G-K1}
A.~Charalambopoulos, D.~Gintides, and K.~Kiriaki.
\newblock The linear sampling method for the transmission problem in
  three-dimensional linear elasticity.
\newblock {\em Inverse Problems}, 18(3):547--558, 2002.

\bibitem{C-G-K2}
A.~Charalambopoulos, D.~Gintides, and K.~Kiriaki.
\newblock The linear sampling method for non-absorbing penetrable elastic
  bodies.
\newblock {\em Inverse Problems}, 19(3):549--561, 2003.

\bibitem{Col}
D.~Colton and R.~Kress.
\newblock On the denseness of {H}erglotz wave functions and electromagnetic
  {H}erglotz pairs in {S}obolev spaces.
\newblock {\em Math. Methods Appl. Sci.}, 24(16):1289--1303, 2001.

\bibitem{Go-Ke}
B.~E.~J. Dahlberg and C.~E. Kenig.
\newblock {$L^p$} estimates for the three-dimensional systems of elastostatics
  on {L}ipschitz domains.
\newblock In {\em Analysis and partial differential equations}, volume 122 of
  {\em Lecture Notes in Pure and Appl. Math.}, pages 621--634. Dekker, New
  York, 1990.

\bibitem{G-K}
D.~Gintides and K.~Kiriaki.
\newblock The far-field equations in linear elasticity---an inversion scheme.
\newblock {\em ZAMM Z. Angew. Math. Mech.}, 81(5):305--316, 2001.

\bibitem{G-S}
D.~Gintides and M.~Sini.
\newblock Identification of obstacles using only the scattered {P}-waves or the
  scattered {S}-waves.
\newblock {\em Inverse Probl. Imaging}, 6(1):39--55, 2012.

\bibitem{H-S}
P.~H{\"a}hner and G.~C. Hsiao.
\newblock Uniqueness theorems in inverse obstacle scattering of elastic waves.
\newblock {\em Inverse Problems}, 9(5):525--534, 1993.

\bibitem{G-A-M}
G.~Hu, A.~Kirsch, and M.~Sini.
\newblock Some inverse problems arising from elastic scattering by rigid
  obstacles.
\newblock {\em Inverse Problems}, 29(1):015009, 21, 2013.

\bibitem{I}
M.~Ikehata.
\newblock Reconstruction of the shape of the inclusion by boundary
  measurements.
\newblock {\em Comm. Partial Differential Equations}, 23(7-8):1459--1474, 1998.

\bibitem{Kar-Sini2}
M.~Kar and M.~Sini.
\newblock Reconstruction of interfaces from the elastic farfield measurements
  using {CGO} solutions.
\newblock {\em ArXiv e-prints}, Nov 2013.

\bibitem{Kup1}
V.~D. Kupradze.
\newblock {\em Potential methods in the theory of elasticity}.
\newblock Translated from the Russian by H. Gutfreund. Translation edited by I.
  Meroz. Israel Program for Scientific Translations, Jerusalem, 1965.

\bibitem{Kup2}
V.~D. Kupradze, T.~G. Gegelia, M.~O. Bashele{\u\i}shvili, and T.~V.
  Burchuladze.
\newblock {\em Three-dimensional problems of the mathematical theory of
  elasticity and thermoelasticity}, volume~25 of {\em North-Holland Series in
  Applied Mathematics and Mechanics}.
\newblock North-Holland Publishing Co., Amsterdam, russian edition, 1979.
\newblock Edited by V. D. Kupradze.

\bibitem{Ma-Mit}
S.~Mayboroda and M.~Mitrea.
\newblock The {P}oisson problem for the {L}am\'e system on low-dimensional
  {L}ipschitz domains.
\newblock In {\em Integral methods in science and engineering}, pages 137--160.
  Birkh\"auser Boston, Boston, MA, 2006.

\bibitem{P}
R.~Potthast.
\newblock {\em Point sources and multipoles in inverse scattering theory},
  volume 427 of {\em Chapman \& Hall/CRC Research Notes in Mathematics}.
\newblock Chapman \& Hall/CRC, Boca Raton, FL, 2001.

\end{thebibliography}

\end{document}